\def\O#1{\text{\ding{\the\numexpr#1+171}}}
\declaretheorem[name=Theorem,within=section]{theorem}
\declaretheorem[name=Lemma,sibling=theorem]{lemma}
\declaretheorem[name=Proposition,sibling=theorem]{proposition}
\declaretheorem[name=Fact,sibling=theorem]{fact}
\declaretheorem[name=Remark,sibling=theorem]{remark}
\declaretheorem[name=Definition,sibling=theorem]{definition}
\def\k{\textnormal{Ker}}
\def\dist{\textnormal{dist}}
\def\conv{\textnormal{Conv}}
\def\supp{\textnormal{supp}}
\def\spn{\textnormal{span}}
\title{No Dimension-Free Deterministic Algorithm Computes \\ Approximate Stationarities of Lipschitzians}
\date{July 14, 2022}
\author{Lai~Tian\thanks{Department of Systems Engineering and Engineering Management, The Chinese University of Hong Kong, Sha Tin, N.T., Hong Kong SAR. E-mail: \href{mailto:tianlai@se.cuhk.edu.hk}{\tt tianlai@se.cuhk.edu.hk}.} \and
Anthony~Man-Cho~So\thanks{Department of Systems Engineering and Engineering Management, The Chinese University of Hong Kong, Sha Tin, N.T., Hong Kong SAR. E-mail: \href{mailto:manchoso@se.cuhk.edu.hk}{\tt manchoso@se.cuhk.edu.hk}.} 
}
\def\intt{\mathop{\textnormal{int}}}
\def\bd{\mathop{\textnormal{bdry}}}
\begin{document}
\maketitle

\begin{abstract}
	We consider the computation of an approximately stationary point for a Lipschitz and semi-algebraic function $f$ with a local oracle. 
	If $f$ is smooth, simple deterministic methods have dimension-free finite oracle complexities.
	For the general Lipschitz setting, only recently, \citet{zhang2020complexity} introduced a randomized algorithm that computes Goldstein's approximate stationarity \citep{goldstein1977optimization} to arbitrary precision with a dimension-free polynomial oracle complexity. 
	
	In this paper, we show that no deterministic algorithm can do the same.
	Even without the dimension-free requirement, we show that any finite time guaranteed deterministic method cannot be general zero-respecting, which rules out most of the oracle-based methods in smooth optimization and any trivial derandomization of \citet{zhang2020complexity}. 
	Our results reveal a fundamental hurdle of nonconvex nonsmooth problems in the modern large-scale setting and their infinite-dimensional extension.
\end{abstract}

\section{Introduction}

Convexity and/or differentiability have been the safe haven for optimization and its applications for decades. Nowadays, nonconvex nonsmooth models (``non''-setting for short) are pervasive, e.g., ReLU neural networks, Generative Adversarial Network (GAN), Piecewise Affine Regression \cite{cui2021modern}, in modern machine learning, operations research, and statistics.
 In such a ``non''-era, automatic differentiation with PyTorch/TensorFlow may not be correct \citep{kakade2018provably}, subgradient flow is not necessarily convergent \cite{daniilidis2020pathological}, even the stationarity concepts are not trivial at all \cite{li2020understanding}.

In this paper, we consider the following  problem for an $L$-Lipschitz function $f:\mathbb{R}^d\rightarrow \mathbb{R}$:
\[\label{eq:p}
\min_{\bm{x}\in\mathbb{R}^d} f(\bm{x}), 
\]
where $f$ could be both nonsmooth and nonconvex. 
For such a general setting,
one of the arguably most fundamental questions to be asked is whether a stationary point of $f$ is computable and if so, how.
Note that when $f$ is smooth, it is folkloric that computing an $\epsilon$-stationary point (i.e., $\|\nabla f(\bm{x})\|\leq \epsilon$) only requires $O(\epsilon^{-2})$ calls to the gradient oracle with gradient descent \citep{nemirovskij1983problem}. 
Extensive efforts have been devoted to the fast computation of approximately stationary points for smooth $f$ in various settings \cite{ghadimi2013stochastic,ghadimi2016accelerated,ge2015escaping,jin2018accelerated,jin2021nonconvex,agarwal2017finding}. Lower bound of dimension-free complexity for smooth $f$ is also rather well-understood \cite{carmon2019lower,carmon2021lower}.
 However, as shown by \cite[Proposition 1]{kornowski2021oracle}, that computing  elements in $\{\bm{x}: \dist(0,\partial f(\bm{x})) \leq \epsilon\}$ in the ``non''-setting\footnote{Here $\partial f(\bm{x})$ is the Clarke subdifferential of $f$. See \Cref{def:subc} for details.} is impossible for any finite-time randomized/deterministic algorithm interacting with a local oracle. 
 
 For well-behaved problems with $\rho$-weakly convex\footnote{Recall $f(\bm{x})$ is $\rho$-weakly convex if $f(\bm{x}) + \frac{\rho}{2} \|\bm{x}\|^2$ is convex.} objective functions, \citet{davis2019stochastic,davis2019proximally} introduced a nice notion named near-approximate stationarity (NAS, see \Cref{def:nas}), which is closely related to the gradient norm of the Moreau envelope of $f$. They showed that a subgradient-type method computes an $(\epsilon,\delta)$-NAS point with dimension-free $O(\rho^4\delta^{-4}+\epsilon^{-4})$ calls to the subgradient oracle. However, \citet{kornowski2021oracle} proved that the oracle complexity of any randomized/deterministic algorithm for $(\epsilon,\delta)$-NAS cannot avoid an exponential dependence on the dimension if $f$ is only $L$-Lipschitz, which implies that the computation of NAS is in general intractable.

On the other front,
a notion that dates back to the seminal work of \citet{goldstein1977optimization}, termed Goldstein approximate stationarity (GAS, see \Cref{def:gas}), exhibits favorable algorithmic consequences. The conceptual scheme makes use of a $\delta$-approximation of the Clarke subdifferential $\partial_\delta f(\bm{x})$ (see \Cref{def:subg}). If we update iteratively with
\[
\bm{x}^{(t+1)} \leftarrow \bm{x}^{(t)} - \delta \cdot \bm{g}^{(t)} / \|\bm{g}^{(t)}\|,
\]
where $\bm{g}^{(t)} \coloneqq \arg\min_{\bm{g} \in \partial_\delta f(\bm{x}^{(t)})} \|\bm{g}\|$ is the minimal norm element in $\partial_\delta f(\bm{x}^{(t)})$, then an $(\epsilon,\delta)$-GAS point can be computed in $O(\epsilon^{-1}\delta^{-1})$ steps. 
However, obtaining $\bm{g}^{(t)}$ for a general Lipschitz function equipped with a local oracle can be intractable as there is no known approach to evaluate $\partial_\delta f(\bm{x})$. Therefore, a series of works, e.g., \citep{burke2020gradient,burke2005robust,kiwiel2007convergence} proposed to build a polyhedral approximation of $\partial_\delta f(\bm{x}^{(t)})$ via random sampling and compute an approximate $\bm{g}^{(t)}$ by solving a quadratic program in every iteration. However, the number of sampling points needed for meaningful approximation of $\partial_\delta f(\bm{x}^{(t)})\subseteq \mathbb{R}^d$ is lower bounded by the dimension $d$. Thus, a dimension-free finite-time complexity cannot be achieved within the existing gradient sampling scheme. 

Recently, \citet{zhang2020complexity} introduced a novel randomized algorithm that computes $(\epsilon,\delta)$-GAS points for $L$-Lipschitz functions with probability at least $1-\gamma$ and with a dimension-free oracle complexity 
\[
{O}\left(\frac{\Delta L^2}{\epsilon^3\delta} \log\left(\frac{\Delta}{\gamma\epsilon\delta}\right)\right),
\] 
where $f(\bm{0}) - \inf_{\bm{x}} f(\bm{x}) \leq \Delta$.
Specifically, the randomization in \citet{zhang2020complexity} appears when they sample $\bm{z}\in\mathbb{R}^d$ uniformly from a line segment $[\bm{x},\bm{y}]$ to exploit the fundamental theorem of calculus:
\[
\mathbb{E}_{\bm{z}}\Big[f'(\bm{z}; \bm{x}-\bm{y})\Big] = \int_0^1 f'\big( t\bm{x} + (1-t)\bm{y}; \bm{x}-\bm{y}\big) \mathrm{d} t = f(\bm{x}) - f(\bm{y}),
\]
which enables the computation of $\tilde{\bm{g}}^{(t)}: \langle \tilde{\bm{g}}^{(t)}, \bm{z}\rangle \leq \frac{1}{4} \| \tilde{\bm{g}}^{(t)} \|^2$ for $\bm{z} \in \partial_\delta f(\bm{x}^{(t)})$ with high probability. 
Therefore, GAS can be computed by a randomized algorithm to arbitrary precision with a dimension-free polynomial oracle complexity.

In sum, it is very curious to ask whether \citep{zhang2020complexity} can be derandomized or not. This could have both theoretical and practical impacts to the black-box optimization of Lipschitz functions, and potentially deepen our understanding of computability of various approximate stationaries.

\subsection{Our Results and Techniques.}

We show in \Cref{thm:find} that there exists an absolute constant $c$ such that for any $0\leq \epsilon,\delta < c$:
\begin{center}
\fbox{
No deterministic algorithm computes $(\epsilon,\delta)$-GAS with dimension-free complexity.}
\end{center}
This puts the dimension-free computation of GAS into a situation similar to that of computing the volume of a convex body \citep{dyer1991computing,barany1987computing,dyer1988complexity,dyer1991random}, for which randomization yields strict improvement. It also reveals a fundamental hurdle of nonconvex nonsmooth problems in the modern large-scale setting and their infinite-dimensional extension.

If we drop the dimension-free requirement and allow any deterministic algorithm with a finite oracle complexity (potentially dependent exponentially on dimension), we show in \Cref{thm:findzr} that there exists an absolute constant $c$ such that for any $0\leq \epsilon,\delta < c$:
\begin{center}
\fbox{
Any deterministic finite-time algorithm for $(\epsilon,\delta)$-GAS cannot be general zero-respecting.}
\end{center}
The notion \emph{general zero-respecting} (see \Cref{sec:setting})
generalized the \emph{zero-respecting} assumption from smooth optimization \cite[Section 2.2]{carmon2019lower} to the nonsmooth setting, which contains the classic notion of linear span \cite[Assumption 2.1.4]{nesterov2018lectures} as a special case.
This result rules out any trivial derandomization of \citet{zhang2020complexity} and most of the oracle-based methods in smooth optimization, e.g., gradient descent (with and without Nesterov acceleration), conjugate gradient \cite{hager2006survey}, BFGS and L-BFGS \cite{liu1989limited}, Newton’s method (with and without cubic regularization \cite{nesterov2006cubic}), and trust-region methods \cite{conn2000trust}.

The major obstacle in lower bounding the oracle complexity of GAS computation is the lack of hardness source.
In the smooth setting, almost all the hard constructions \cite{carmon2019lower,carmon2021lower,ghadimi2013stochastic} are built upon what Nesterov called ``the worst-function in the world'' \citep[\S 2.1.2]{nesterov2018lectures}. However, these constructions fail to rule out $(\epsilon,\delta)$-GAS if the iteration number is $ \Omega\big(\log (1/\delta)\big)$.
In the nonsmooth case, simple resisting oracle type constructions \cite{vavasis1993black,zhang2020complexity} would not work for $(\epsilon,\delta)$-GAS if there exist $i \in [T], j \in [T]$ such that  $0< \|\bm{x}_i - \bm{x}_j \| < \delta$.
Another source of hardness called ``string guessing'' \cite{bockenhauer2014string,braun2017lower} is popular in the online and nonsmooth convex settings. With a modified ``string guessing'' function, \citet{kornowski2021oracle} proved that the oracle complexity of any randomized/deterministic algorithm for $(\epsilon,\delta)$-NAS cannot avoid an exponential dependence on the dimension. However, these constructions would also be inapplicable to the computation of $(\epsilon,\delta)$-GAS if the iteration number is $ \Omega\big(\log (1/\delta)\big)$.

Our main technical contribution is a new resisting oracle type ``wedge''-shaped hard construction that is tailored for deterministic algorithms and GAS computation. The most interesting property of the construction is that the ``ambiguity region''  vanishes as the iterations get closer and closer.
On a high level, within the ``ambiguity region'', the algorithm cannot decide whether the underlying function is a single coordinate resisting function similar to \cite{vavasis1993black,zhang2020complexity} or our ``wedge'' construction. With very careful design and analysis, we eliminate all GAS points below certain precision near the  ``ambiguity region'', so that the algorithm cannot even identify a bounded set containing any GAS point. This summarizes the high-level idea of our new construction. We also remark that our hardness results hold even under a very strong local oracle assumption, which would return (generalized) derivatives of all orders (if exist).
\subsection{Related Work.}

\paragraph{Asymptotic Analysis.}
Clarke stationary points (i.e., $\{\bm{x}:0\in\partial f(\bm{x})\}$) are computable in the asymptotic regime for quite general functions. 
\citet{benaim2005stochastic,majewski2018analysis,davis2020stochastic} studied the asymptotic convergence of subgradient-type methods from a  differential inclusion perspective. Specifically, \citet{davis2020stochastic} showed the asymptotic convergence of subgradient method to Clarke stationary points for Whitney stratifiable objective functions. \citet{daniilidis2020pathological} introduced a pathological Lipschitz function for which the vanilla subgradient method may not converge even in continuous time. An interesting discussion of the relation between our impossibility results and these asymptotic analysis can be found in \Cref{rmk:asym}.
\paragraph{Nonasymptotic Analysis.}
The nonasymptotic analysis for the general ``non''-problem is still in its infancy stage. For the computation of NAS,
\citep{davis2019proximally,davis2019stochastic} showed that for $\rho$-weakly convex functions, $(\epsilon,\delta)$-NAS is computable with  $O(\rho^4\delta^{-4}+\epsilon^{-4})$ oracle calls. On the negative side,
\citet{kornowski2021oracle} showed that neither deterministic nor randomized algorithm can compute NAS for Lipschitz functions without an exponential dependence on dimension, which implies NAS is in general intractable. \citet{tian2021hardness} sharpened the hardness results for NAS to $\rho$-weakly convex with unbounded $\rho$, thus matching the positive results. For GAS, the gradient sampling scheme \cite{burke2005robust,kiwiel2007convergence,kiwiel2010nonderivative,burke2020gradient} promises finite but dimension-dependent complexity. Recently, \citet{zhang2020complexity} reported a randomized algorithm with a dimension-free oracle complexity to compute arbitrarily  precise GAS. However, \citep{zhang2020complexity} use an impractical  subgradient oracle, which is further  removed by extra randomized  procedures in \cite{tian2022complexity,davis2021gradient}.

\paragraph{Notation.} Throughout this paper, scalars, vectors and matrices are denoted by lowercase letters, boldface lower case letters and boldface uppercase letters, respectively.
The notation used in this paper is mostly standard in optimization and variational analysis.
 $\dist(\bm{x},S)\coloneqq\inf_{\bm{v}\in S} \|\bm{v}-\bm{x}\|$; $A \otimes B$ denotes the direct product of $A$ and $B$; $A^{\otimes 2}\coloneqq A\otimes A$; $\mathbb{B}_\epsilon(\bm{x})\coloneqq\{\bm{v}:\|\bm{v} - \bm{x}\|\leq \epsilon\}; \mathbb{B}\coloneqq \mathbb{B}_1(\bm{0})$; we may write $\mathbb{B}^d_\epsilon(\bm{x})$ to emphasize the dimension; $[\bm{x},\bm{y}]\coloneqq \{\gamma\bm{x}+(1-\gamma)\bm{y}:\gamma \in [0,1]\}$;  $\conv S$ is the convex hull of set $S$; $A^c$ is the complement of set $A$;  $\supp(\bm{x}) = \{i:x_i \neq 0\}$; for any optimization algorithm $A$ applied to a function $f$, we write the generated sequence as $\left\{\bm{x}^{A[f],(t)}\right\}_t$; we use $\bm{e}_i$ for the $i$-th column of identity matrix; $a\vee b\coloneqq	\max\{a,b\}$; $a\wedge b\coloneqq	\min\{a,b\}$; $\mathbb{N}^+\coloneqq \mathbb{N}\backslash\{0\}$.

\paragraph{Organization.} We introduce the necessary background on variational analysis and formal definitions of approximate stationarities in \Cref{sec:prel}. 
The main impossibility results and proofs are in \Cref{sec:main}. We conclude the paper in \Cref{sec:concl}.
\section{Preliminaries}\label{sec:prel}

\subsection{Generalized Differentiation Theory}

For a Lipschitz continuous function $f$ that could be both nonsmooth and nonconvex, a widely used generalized subdifferential is the  Clarke subdifferential \citep[Theorem 9.61]{rockafellar2009variational}:
\begin{definition}[Clarke subdifferential]\label{def:subc}
	\label{def:subd} Given a point $\bm{x}$, the Clarke subdifferential of Lipschitz $f$ at $\bm{x}$ is defined by
	\[
	\partial f(\bm{x}) \coloneqq \conv\big\{\bm{s}:\exists \bm{x}^\prime\rightarrow \bm{x}, \nabla f(\bm{x}^\prime) \textnormal{ exists}, \nabla f(\bm{x}^\prime)\rightarrow  \bm{s}\big\}.
	\]
\end{definition}
Perturbation and approximation are powerful principles underlying many optimization theory and algorithms.
The following $\delta$-approximation of $\partial f(\bm{x})$ introduced by \citet[Definition 2.2]{goldstein1977optimization} 
has a nice limiting behavior (see \Cref{fct:subd}) and is convenient for algorithmic developments.
\begin{definition}[Goldstein $\delta$-subdifferential]
	\label{def:subg} Given a point $\bm{x}$ and $\delta \geq 0$, the Goldstein $\delta$-subdifferential of Lipschitz $f$ at $\bm{x}$ is defined by
	\[
	\partial_\delta f(\bm{x}) \coloneqq \conv\big\{\textstyle{\bigcup_{\bm{y}\in\mathbb{B}_\delta (\bm{x})}} \partial f(\bm{y}) \big\}.
	\]
\end{definition}
Some useful properties of the Clarke subdifferential and its Goldstein approximation for Lipschitz continuous functions are collected below: 
\begin{fact}[cf. \citet{clarke1990optimization,goldstein1977optimization}]\label{fct:subd} For an $L$-Lipschitz continuous $f$ and $\delta > 0$,
\begin{itemize}
	\item $\partial f(\bm{x}), \partial_\delta f(\bm{x})$ are nonempty, convex, compact;
	\item $\partial f(\bm{x}) = \cap_{\delta > 0} \partial_\delta f (\bm{x})$;
	\item if $f$ is $C^1$ near $x$, then $\partial f(x) = \{\nabla f(\bm{x})\}$;
	\item if $f$ is convex, then $\partial f(x)$ is equal to the convex subdifferential \cite[\S D, Definition 1.2.1]{hiriart2004fundamentals}.
\end{itemize}
\end{fact}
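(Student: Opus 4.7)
The Fact compiles four classical properties of the Clarke and Goldstein subdifferentials; the plan is to verify each item separately, with Rademacher's theorem acting as the workhorse.

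First I would address nonemptiness, convexity, and compactness. Rademacher's theorem gives a set $D$ of full Lebesgue measure on which the $L$-Lipschitz $f$ is differentiable; for any $\bm{x}$, pick $\bm{x}_k \in D$ with $\bm{x}_k \to \bm{x}$, and use the uniform bound $\|\nabla f(\bm{x}_k)\| \le L$ together with Bolzano--Weierstrass to extract a convergent subsequence, witnessing nonemptiness of the raw limit set $G(\bm{x}):=\{\bm{s}: \exists \bm{x}'\to \bm{x},\ \nabla f(\bm{x}')\to \bm{s}\}$. A diagonal argument shows $G(\bm{x})$ is closed, and it lies in $L\cdot\mathbb{B}$, so it is compact. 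In finite dimension, the convex hull of a compact set is compact, yielding the claim for $\partial f(\bm{x})$. Restricting the same argument to $D\cap \mathbb{B}_\delta(\bm{x})$ and taking $\conv$ of a finite union of compact convex sets (or using that $\mathbb{B}_\delta(\bm{x})$ itself is compact) produces the analogous statement for $\partial_\delta f(\bm{x})$.

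For the intersection formula $\partial f(\bm{x}) = \bigcap_{\delta > 0} \partial_\delta f(\bm{x})$, the inclusion $\subseteq$ is immediate from the definitions. The reverse inclusion is the crux: given $\bm{s}\in \bigcap_\delta \partial_\delta f(\bm{x})$, for each $\delta_n \downarrow 0$, Carathéodory's theorem writes $\bm{s} = \sum_{i=0}^{d}\lambda_{n,i}\bm{g}_{n,i}$ where each $\bm{g}_{n,i}$ is a limit of gradients at points in $\mathbb{B}_{\delta_n}(\bm{x})$; a small approximation step moves these to honest gradients at points $\bm{y}_{n,i}\in D\cap \mathbb{B}_{2\delta_n}(\bm{x})$. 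Diagonal extraction produces limits $\lambda_{n,i}\to \lambda_i$ and $\nabla f(\bm{y}_{n,i})\to \bm{g}_i \in G(\bm{x})$, so $\bm{s} = \sum_i \lambda_i \bm{g}_i \in \conv G(\bm{x}) = \partial f(\bm{x})$.

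The $C^1$ item is short: continuity of $\nabla f$ near $\bm{x}$ forces every limit $\nabla f(\bm{x}') \to \bm{s}$ to equal $\nabla f(\bm{x})$, so $G(\bm{x})$ is a singleton and $\conv$ preserves this. For the convex case, I would argue by double inclusion. If $\bm{v}$ lies in the convex subdifferential of $f$ at $\bm{x}$, then monotonicity of the convex subdifferential plus density of $D$ lets $\bm{v}$ be approximated by convex subgradients at nearby points of differentiability, which coincide with the classical gradients there; hence $\bm{v}\in G(\bm{x})\subseteq \partial f(\bm{x})$. Conversely, any element of $G(\bm{x})$ is a limit of convex subgradients, and the convex subdifferential is closed and convex, so $\conv G(\bm{x})$ sits inside the convex subdifferential.

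The main obstacle is the second bullet: one must carefully marry the inner $\conv$ in the definition of $\partial_\delta f$ with the outer $\conv$ in the definition of $\partial f$ through a Carathéodory-plus-diagonal extraction, avoiding any circularity and keeping track of the fact that elements of $\partial_{\delta_n} f(\bm{x})$ are convex combinations of limits of gradients rather than gradients themselves.
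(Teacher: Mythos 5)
The paper does not prove this Fact; it cites it (``cf.\ Clarke, Goldstein'') as standard background, so there is no in-paper argument to compare against. Judging your proposal on its own merits: the first three bullets are essentially right, with two small imprecisions worth noting. In the compactness argument for $\partial_\delta f(\bm{x})$ you speak of ``$\conv$ of a finite union of compact convex sets,'' but $\bigcup_{\bm{y}\in\mathbb{B}_\delta(\bm{x})}\partial f(\bm{y})$ is an uncountable union; its compactness comes from local boundedness together with upper semicontinuity of the Clarke subdifferential map (a closed-valued, locally bounded, upper semicontinuous map sends the compact ball $\mathbb{B}_\delta(\bm{x})$ to a compact set), and since USC is not yet on your table at that stage, the step deserves to be made explicit rather than parenthetical. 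In the second bullet, Carath\'eodory gives $\bm{g}_{n,i}\in\partial f(\bm{y}_{n,i})$, i.e.\ a \emph{convex combination} of limits of gradients rather than a single such limit as written; your closing paragraph shows you saw this, and the extra Carath\'eodory-plus-approximation layer (or, more economically, USC of $\partial f$) does close it, so I read it as noted rather than missing.

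The genuine gap is in the fourth bullet. You claim that if $\bm{v}$ lies in the convex subdifferential of $f$ at $\bm{x}$, then density of the differentiability set $D$ lets $\bm{v}$ be approximated by convex subgradients at nearby points of differentiability, hence $\bm{v}\in G(\bm{x})$. This is false: at a nearby $\bm{y}\in D$ the convex subdifferential is the singleton $\{\nabla f(\bm{y})\}$, so this route can only reach elements of $G(\bm{x})$, which is in general a strict subset of the convex subdifferential. Already for $f(x)=|x|$ at $x=0$ one has $G(0)=\{-1,+1\}$ while the convex subdifferential is $[-1,1]$; for $v\in(-1,1)$ there is no sequence $y_k\to 0$ with $\nabla f(y_k)\to v$. ``Monotonicity of the convex subdifferential'' does not rescue this. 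What you need is $\bm{v}\in\conv G(\bm{x})$, not $\bm{v}\in G(\bm{x})$, and your approximation argument does not produce it. The standard repair is via support functions: for convex $f$ the Clarke generalized directional derivative $f^\circ(\bm{x};\cdot)$ equals the ordinary one-sided directional derivative $f'(\bm{x};\cdot)$, and since $\partial f(\bm{x})$ and the convex subdifferential are both nonempty compact convex sets whose support functions are exactly these, the two sets coincide. (Equivalently, Rockafellar's theorem that the convex subdifferential of a finite convex function equals the closed convex hull of gradient limits, plus compactness to drop the closure.) Either route replaces your one-sentence claim for this direction.
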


\subsection{Approximate Stationarity Concepts}
We are now ready to provide the formal definitions of two important approximate stationarity notions, i.e., GAS \cite{goldstein1977optimization,zhang2020complexity,tian2022complexity} and NAS \cite{davis2019stochastic,davis2019proximally}. 
\begin{definition}[Goldstein approximate stationarity, GAS]\label{def:gas}
	Given a locally Lipschitz function $f:\mathbb{R}^d\rightarrow \mathbb{R}$, we say that $\bm{x}\in\mathbb{R}^d$ is an $(\epsilon,\delta)$-GAS point if
	\[
	\dist \Big(0, \partial_\delta f(\bm{x})\Big) \leq \epsilon.
	\]
\end{definition}

\begin{definition}[near-approximate stationarity, NAS]\label{def:nas}
	Given a locally Lipschitz function $f:\mathbb{R}^d\rightarrow \mathbb{R}$, we say that $\bm{x}\in\mathbb{R}^d$ is an $(\epsilon,\delta)$-NAS point if
	\[
	\dist \Big(0, \textstyle{\bigcup_{\bm{y}\in\mathbb{B}_\delta (\bm{x})}} \partial f(\bm{y})\Big) \leq \epsilon.
	\]
\end{definition}

It is easy to see that if $\bm{x}$ is NAS, then $\bm{x}$ is also GAS as $\partial_\delta f(\bm{x}) \supseteq\cup_{\bm{y}\in\mathbb{B}_\delta (\bm{x})} \partial f(\bm{y})$. However, the converse does not hold in general, even for convex \cite[Proposition 2.7]{tian2022complexity} and continuously differentiable functions \citep[Proposition 2]{kornowski2021oracle}. Besides, \citet{kornowski2021oracle} proved that the oracle complexity of any randomized/deterministic algorithm for $(\epsilon,\delta)$-NAS cannot avoid an exponential dependence on the dimension if $f$ is only $L$-Lipschitz, which implies that NAS is in general intractable.

\subsection{Existence and Impossibility of Testing}
In this subsection, we discuss the existence of and impossibility of testing GAS.

\begin{proposition}[existence]
Let $f:\mathbb{R}^d \rightarrow \mathbb{R}$ be local Lipschitz with $\inf f$ finite. Then for any $\epsilon > 0, \delta > 0$, there exists $\bm{x}$ such that 
\[
\dist\Big(0, {\textstyle \bigcup_{\bm{y} \in \mathbb{B}_\delta (\bm{x})} \partial f (\bm{y})} \Big) \leq \epsilon.
\] 
\end{proposition}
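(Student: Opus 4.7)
The first step is to reduce the stated near-approximate-stationarity claim to the easier existence of a single Clarke $\epsilon$-stationary point. Indeed, if I can produce $\bm{z}\in\mathbb{R}^d$ with $\dist(0,\partial f(\bm{z}))\le\epsilon$, then setting $\bm{x}:=\bm{z}$ is enough: since $\bm{z}\in\mathbb{B}_\delta(\bm{z})$, one has $\partial f(\bm{z})\subseteq\bigcup_{\bm{y}\in\mathbb{B}_\delta(\bm{z})}\partial f(\bm{y})$, hence $\dist\bigl(0,\bigcup_{\bm{y}\in\mathbb{B}_\delta(\bm{x})}\partial f(\bm{y})\bigr)\le\dist(0,\partial f(\bm{z}))\le\epsilon$. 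So the whole game is to exhibit some $\epsilon$-Clarke-stationary point.

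For this reduced statement I would invoke Ekeland's variational principle, which is the standard vehicle for producing approximate stationarity from a finite infimum. Since $\inf f$ is finite and $f$ is continuous (locally Lipschitz), I can pick any $\bm{x}_0\in\mathbb{R}^d$ with $f(\bm{x}_0)\le \inf f+\epsilon\delta$. Applying Ekeland to $f$ with slack $\epsilon\delta$ and radius $\delta$ returns a point $\bm{z}$ satisfying $\|\bm{z}-\bm{x}_0\|\le\delta$, $f(\bm{z})\le f(\bm{x}_0)$, and crucially the global perturbation bound $f(\bm{z})\le f(\bm{y})+\epsilon\|\bm{y}-\bm{z}\|$ for every $\bm{y}\in\mathbb{R}^d$. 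In particular, $\bm{z}$ is an unconstrained minimizer of the locally Lipschitz function $h(\bm{y}):=f(\bm{y})+\epsilon\|\bm{y}-\bm{z}\|$.

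Applying Fermat's rule and the Clarke sum rule at $\bm{z}$, which are justified because $f$ is locally Lipschitz and $\bm{y}\mapsto\epsilon\|\bm{y}-\bm{z}\|$ is finite convex with subdifferential contained in $\epsilon\mathbb{B}$ at $\bm{z}$, I would obtain $0\in\partial h(\bm{z})\subseteq\partial f(\bm{z})+\epsilon\mathbb{B}$. This yields exactly $\dist(0,\partial f(\bm{z}))\le\epsilon$, and combined with the reduction in the first paragraph completes the proof.

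I do not anticipate any serious obstacle: the hypotheses for Ekeland's principle (lower semicontinuity and boundedness from below) and for Clarke calculus (local Lipschitzness) are built into the assumptions. The only mild point requiring attention is the bookkeeping of the Ekeland parameters: the slack $\epsilon\delta$ and the radius $\delta$ have to be matched so that the effective Lipschitz penalty carries coefficient exactly $\epsilon$, which is the precision we are targeting; any other pairing would merely produce a weaker constant in the final NAS bound.
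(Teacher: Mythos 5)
Your proof is correct and follows essentially the same route as the paper: an $\epsilon\delta$-approximate minimizer, Ekeland's variational principle with slack $\epsilon\delta$ and radius $\delta$, and then Fermat's rule plus the Clarke sum rule to conclude $\dist(0,\partial f(\bm{z}))\le\epsilon$. The only cosmetic difference is your opening reduction to a single Clarke $\epsilon$-stationary point (taking $\bm{x}:=\bm{z}$), whereas the paper returns the approximate minimizer as $\bm{x}$ and the Ekeland point as the witness $\bm{y}\in\mathbb{B}_\delta(\bm{x})$; the underlying argument is identical.
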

\begin{proof}
Let $\bm{x} \in \epsilon\delta$-$\arg\min f$, whose existence is guaranteed by finite $\inf f$.
	By Ekeland's variational principle \cite[Proposition 1.43]{rockafellar2009variational}, there exists a $\bm{y} \in \mathbb{B}_\delta(\bm{x})$ with $f(\bm{y}) \leq f(\bm{x})$ and $\arg\min_{\bm{z}} \{ f(\bm{z}) + \epsilon \|\bm{z} - \bm{y}\| \} = \{\bm{y}\}$. By Fermat's rule \cite[Theorem 10.1]{rockafellar2009variational} and sum rule \cite[Corollary 10.9]{rockafellar2009variational}, we have
	\[
	0\in \partial (f + \epsilon \|\cdot - \bm{y}\|) (\bm{y}) \subseteq \partial f(\bm{y}) + \epsilon \mathbb{B},
	\]
	which implies $\dist\big(0,\partial f(\bm{y})\big) \leq \epsilon$ as required.
\end{proof}

In the following theorem, we will use notions named \emph{deterministic algorithm} and \emph{local oracle}, whose formal definitions can be found in \Cref{sec:setting}.

\begin{theorem} Suppose that $0<\epsilon,\delta <1$.
For any deterministic algorithm $A$ using local oracle information, there exists a $2$-Lipschitz function $f:\mathbb{R} \rightarrow \mathbb{R}$ and $y\in\mathbb{R}$ such that $A$ cannot decide $\dist(0,\partial_\delta f(y)) \leq \epsilon$ or not with finite oracle complexity.
\end{theorem}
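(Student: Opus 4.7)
The plan is a resisting-oracle argument. By translation we may fix the test point $y=0$, and given a deterministic algorithm $A$ I would construct two $2$-Lipschitz functions $f_1,f_2:\mathbb{R}\to\mathbb{R}$ that look identical to the (strong) local oracle at every query $A$ issues, but for which the correct $(\epsilon,\delta)$-GAS verdicts at $0$ disagree.

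I would take $f_1(x)=x$: it is $1$-Lipschitz, and since $\partial_\delta f_1(0)=\{1\}$ we have $\dist(0,\partial_\delta f_1(0))=1>\epsilon$, so the correct verdict is \textsc{no}. Simulating $A$ against the oracle for $f_1$ produces, by finite oracle complexity, a finite sequence of queries $x_1,\ldots,x_T$ and a verdict $D$. If $D=\textsc{yes}$ then $A$ already errs on $f_1$ and we stop; henceforth assume $D=\textsc{no}$.

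For the second function I would set $f_2=f_1+h$, with $h\in C^{\infty}_{c}(\mathbb{R})$ a bump satisfying: (i) $\supp h\subset(a,b)$ for some open interval $(a,b)\subset(-\delta,\delta)$ at positive distance from every $x_i$; (ii) $\|h'\|_\infty\le 1$; (iii) $\min h'=-1$, attained inside $(a,b)$. Such an $(a,b)$ exists because $\{x_1,\ldots,x_T\}$ is finite and therefore leaves at least one gap of length $\ge 2\delta/(T+1)$ inside $(-\delta,\delta)$; a suitably scaled translate of the standard mollifier $\psi(u)=e^{-1/(1-u^2)}$ for $|u|<1$ and $\psi(u)=0$ otherwise, placed inside this gap, satisfies (ii)--(iii). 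Then $|f_2'|=|1+h'|\le 2$, so $f_2$ is $2$-Lipschitz; and any point $z\in(a,b)\subset\mathbb{B}_\delta(0)$ with $f_2'(z)=0$ gives $0\in\partial_\delta f_2(0)$ and hence $\dist(0,\partial_\delta f_2(0))=0\le\epsilon$, so the correct verdict for $f_2$ is \textsc{yes}.

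To finish, observe that $f_1$ and $f_2$ coincide on an open neighborhood of each $x_i$, so the local oracle---even the strongest variant returning all generalized derivatives of every order---cannot distinguish $f_1$ from $f_2$ at any queried point. Replaying $A$ on the oracle for $f_2$ therefore reproduces the same queries in the same order and the same verdict $D=\textsc{no}$, which is wrong for $f_2$. Taking $(f,y)=(f_2,0)$ yields the desired counterexample. The only genuine subtlety is reconciling conditions (i)--(iii) on $h$, but since (i) only forces $h$ to vanish near finitely many points, a rescaled mollifier placed inside a single positive-length gap in $(-\delta,\delta)$ handles (ii) and (iii) simultaneously; no assumption beyond $0<\epsilon,\delta<1$ is used.
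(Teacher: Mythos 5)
Your proof is correct and follows essentially the same resisting-oracle strategy as the paper: take $f_1(x)=x$ and $y=0$, simulate $A$ to extract the finite query set, locate a gap $[a,b]\subset\mathbb{B}_\delta(0)$ missed by every query, and perturb $f$ only inside that gap so that $0\in\partial_\delta f_2(0)$ while all local-oracle responses to the recorded queries are unchanged, forcing $A$ to repeat the same (now wrong) verdict. The only difference is the shape of the perturbation: the paper uses the piecewise-linear $f_2(x)=\min\{x,\ a+4|x-\tfrac{b-a}{2}|\}$, which as written only yields a Lipschitz constant of $4$, whereas your smooth bump with $\|h'\|_\infty\leq 1$ keeps $f_2$ within the stated $2$-Lipschitz bound; either choice of perturbation completes the argument.
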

\begin{proof}
	Let $f_1(x) = x$ and $y=0$. Suppose that $A$ will return the correct answer, i.e., $\dist(0,\partial_\delta f_1(0)) > \epsilon$ and $A$ queries $\{x^{(t)}\}_{t=1}^T$ to the local oracle. Then, we only need to show there exists a function $f_2$ that is equal to $f_1$ in a neighborhood of $x^{(t)}$ for any $t \in [T]$ but $\dist(0,\partial_\delta f_2(0)) \leq \epsilon$. Indeed, such a construction is easy and similar to \citep[Section 5]{nesterov2012make} but not identical. 
	It is clear that there exists a line segment $[a,b] \subseteq \mathbb{B}_\delta (0)$ such that $x^{(t)} \notin [a,b],\forall t \in [T]$. Then, let 
	\[
	f_2(x) = \min\left\{x, a+ 4\left| x - \frac{b-a}{2} \right|  \right\}.
	\]
	Thus, $\|f_2\|_{\textnormal{Lip}} \leq 4$ and $0 \in \partial_\delta f(0)$ as required.
\end{proof}

\section{Deterministic Inapproximability of Stationarities}\label{sec:main}
In this section, we present the main results of this paper.
We discuss the formal definitions of oracle, algorithm, and functions in \Cref{sec:setting}. 
Then, the main theorems for general zero-respecting and general deterministic setting are reported in \Cref{sec:thms}. All proofs are collected in \Cref{sec:prfs}.
\subsection{Settings}\label{sec:setting}

\paragraph{Local Oracle.}
Given $F:\mathbb{R}^d\rightarrow \mathbb{R}$ and queried on $\bm{x}\in\mathbb{R}^d$, a local oracle $\mathcal{O}_F(\bm{x})$ returns a function $G:\mathbb{R}^d\rightarrow\mathbb{R}$ such that there exists $ \nu > 0:$
\[
F(\bm{y}) = G(\bm{y}), \qquad \forall \bm{y} \in \mathbb{B}^d_{\nu} (\bm{x}).
\]
\begin{remark}
A subtle but crucial point is that the local oracle $\mathcal{O}_F$ only returns the local copy function $G$ but not the radius $\nu$.
Otherwise, the resisting oracle argument in \Cref{sec:construction} would fail if the algorithm queries $\bm{x}^{(t+1)} \in \mathbb{B}_{\nu^{\bm{x}^{(t)}}} \left(\bm{x}^{(t)}\right)$. Nevertheless, $\mathcal{O}_F$ is still very powerful. If $F$ is smooth, then $O_F(\bm{x})$ is capable of providing (if exists)
$F(\bm{x})$, $\nabla F(\bm{x})$, $\nabla^2 F(\bm{x})$, and $\nabla^p F(\bm{x}), \forall p \in \mathbb{N}^+$.
For a nonsmooth $F$, $O_F(\bm{x})$ is capable of providing (if exists) the Clarke subdifferential \cite[Theorem 9.61]{rockafellar2009variational}, Fr\'{e}chet subdifferential \cite[Exercise 8.4]{rockafellar2009variational}, Mordukhovich limiting subdifferential \cite[Theorem 8.3(b)]{rockafellar2009variational}, and even the impractical subgradient selection oracle in \cite[Assumption 1(a)]{zhang2020complexity}. We note here that assuming a (unreasonably) strong oracle would only strength our impossibility results as the algorithms are allowed to use more information.
\end{remark}

We now turn to the formal definitions of the deterministic $\mathcal{A}_\textnormal{det}$ and deterministic general zero-respecting $\mathcal{A}_\textnormal{det-gzr}$ algorithm classes:
\paragraph{Algorithm Class.} We consider $\mathcal{A}_\textnormal{det}$ and  $\mathcal{A}_\textnormal{det-gzr}$, where
\begin{itemize}%
	\item $\mathcal{A}_\textnormal{det}$: all algorithms that use local information of current and past points from $\mathcal{O}_F(\bm{x})$ deterministically. 
Formally, for any $A\in\mathcal{A}_\textnormal{det}$, if there exists a $\nu > 0$ such that $\mathcal{O}_{F}\left(\bm{x}^{A[F],(i)}\right)(\bm{y}) = \mathcal{O}_{G}\left(\bm{x}^{A[F],(i)}\right)(\bm{y}), \forall i \in [t], \bm{y} \in \mathbb{B}_\nu\left( \bm{x}^{A[F],(i)}\right)$, then $\bm{x}^{A[F],(j)} = \bm{x}^{A[G],(j)},\forall j \in [t+1]$.
	\item  $\mathcal{A}_\textnormal{det-gzr}$: deterministic general zero-respecting algorithms satisfy $\mathcal{A}_\textnormal{det-gzr}\subseteq\mathcal{A}_\textnormal{det}$ and $\forall t \in \mathbb{N}^+$:
	\[
	\supp\Big(\bm{x}^{(t)}\Big) \subseteq \bigcup_{i<t} \left\{
	j \in [d]: \forall \nu > 0, \exists (\bm{y}, \bm{y}+\theta\bm{e}_j)  \in \mathbb{B}^d_{\nu} \left(\bm{x}^{(i)}\right)^{\otimes 2}, \theta \in \mathbb{R}:  F(\bm{y}) \neq F(\bm{y}+\theta\bm{e}_j )
	\right\}.
	\]
\end{itemize}

\begin{remark}
The oracle complexity of any deterministic algorithm interacting with a $p$th-order oracle $\big(f(\bm{x}),\nabla f(\bm{x}),\dots, \nabla^p f(\bm{x})\big)$ in the smooth setting \cite{carmon2019lower} is lower bounded by that of $\mathcal{A}_\textnormal{det}$ interacting with a local oracle.
General zero-respecting class is a nonsmooth generalization of the zero-respecting assumption \cite[Section 2.2]{carmon2019lower} from smooth optimization. Zero-respecting class contains most of the oracle-based methods in smooth optimization \cite{carmon2019lower}, e.g., gradient descent (with and without Nesterov acceleration), conjugate gradient \cite{hager2006survey}, BFGS and L-BFGS \cite{liu1989limited}, Newton’s method (with and without cubic regularization \cite{nesterov2006cubic}), and trust-region methods \cite{conn2000trust}. It also contains the widely used notion of linear span \cite[Assumption 2.1.4]{nesterov2018lectures} for developing lower bounds as a special case.

\end{remark}

\paragraph{Function class.} For a given $C>0$ and dimension $d \in \mathbb{N}^+$, we consider the following Lipschitz function class:
\[
\mathcal{F}^\textnormal{Lip}_{C,d}\coloneqq \left\{f:\mathbb{R}^{d'} \rightarrow \mathbb{R}: d'\in [d], \|f\|_\text{Lip} \leq C, f(\bm{0}) - \inf_{\bm{x}} f(\bm{x}) \leq C\right\}.
\]

\begin{remark}\label{rmk:fncs}
Our ``wedge'' hard construction is a Lipschitz continuous piecewise linear function. Therefore, the results in \Cref{sec:main} also hold for Lipschitz semi-algebraic functions \cite{davis2020stochastic}. 
\end{remark}

\subsection{Main Results}\label{sec:thms}
For the general deterministic setting, we have the following impossibility result:
\begin{theorem}[deterministic]\label{thm:find}
Suppose that $0<\epsilon, \delta < \frac{1}{\sqrt{17}}$ and $C\geq 6$. For any $T < +\infty$ and $d \geq T + 1$, we have
\[
\inf_{A \in \mathcal{A}_\textnormal{det}} \sup_{f \in \mathcal{F}_{C,d}^{\textnormal{Lip}}}\  \min_{t \in [T]} \ \dist\Big(0, \partial_\delta f\left(\bm{x}^{{A[f]},(t)} \right)\Big) > \epsilon.
\]	
\end{theorem}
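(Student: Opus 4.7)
The plan is a resisting-oracle argument adapted to the local-oracle model and the Goldstein $\delta$-subdifferential. Fix any $A \in \Adet$ and let $\bm{x}^{(1)},\ldots,\bm{x}^{(T)}$ denote its (deterministic) trajectory on the function we shall construct. Because $A$ is deterministic and consults $f$ only through a local oracle, the trajectory is invariant under replacing $f$ by any $g$ that coincides with $f$ on a neighborhood of each past query. So I would specify $f$ adaptively: after the $t$-th query I commit to the oracle's local copy on a tiny ball around $\bm{x}^{(t)}$ and keep the rest of $f$ unspecified. The assumption $d \geq T+1$ supplies the crucial slack: at every stage there is at least one coordinate direction orthogonal to the affine span of past queries along which future modifications cannot be detected by the local oracle.

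I would take the ``default'' local copy reported by the oracle to be the single-coordinate resisting function $f_\star(\bm{x}) = x_1$, so that $\partial_\delta f_\star(\bm{x}^{(t)}) = \{\bm{e}_1\}$ and hence $\dist\bigl(0,\partial_\delta f_\star(\bm{x}^{(t)})\bigr) = 1 > \epsilon$ at every query. Since $f_\star$ is not bounded below, the completed function $f$ must embed a ``wedge'' somewhere so that both $f(\bm{0}) - \inf f \leq C$ and $\|f\|_{\textnormal{Lip}} \leq C$ hold. My model wedge is a piecewise-linear V of the form $\max\{x_1,\, -x_1 + \alpha\langle \bm{u}, \bm{x}\rangle + \beta\}$ pasted onto $f_\star$, whose ridge is a hyperplane on which two opposing subgradients average to $\bm{0}$ inside their convex hull, thereby creating GAS points on that ridge. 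The wedge placement must satisfy (i) its support stays at distance strictly greater than $\delta$ from every past query $\bm{x}^{(s)}$, so that $\partial_\delta f(\bm{x}^{(s)})$ is unaffected and still equals $\{\bm{e}_1\}$, keeping $\bm{x}^{(s)}$ non-GAS; and (ii) its slope magnitudes stay within the Lipschitz budget $C$ (which explains the hypothesis $C \geq 6$).

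Maintaining the ``ambiguity region'' across iterations is the heart of the argument. I would carry as an invariant a non-empty family $\mathcal{R}_t$ of admissible wedge placements and shapes at step $t$ such that every member is consistent with the oracle reports issued so far and renders each queried point non-GAS in the eventual completion. The main obstacle is geometric: if the queries start to accumulate or approach the current admissible placements, the feasible set shrinks --- this is exactly the ``vanishing ambiguity region'' phenomenon highlighted in the introduction. I would handle this by using, at step $t+1$, a fresh coordinate $i_{t+1} \in [d]\setminus\{1, i_1, \ldots, i_t\}$ (available because $d \geq T+1$) to orient the wedge along a direction previously untouched, placing the ridge beyond $\bigcup_{s\leq t+1}\mathbb{B}_\delta(\bm{x}^{(s)})$, and scaling the wedge aperture appropriately with $\delta$. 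The explicit threshold $\epsilon, \delta < 1/\sqrt{17}$ should emerge from balancing this aperture angle against the requirement that the Goldstein subdifferential at every queried point stays at distance $>\epsilon$ from $\bm{0}$ even after the wedge is added.

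Fixing any $f \in \mathcal{R}_T$ finishes the argument: by construction $f \in \mathcal{F}^{\textnormal{Lip}}_{C,d}$ and $\dist\bigl(0, \partial_\delta f(\bm{x}^{(t)})\bigr) > \epsilon$ for all $t \in [T]$, which yields the theorem. The hardest technical step is the invariant maintenance in the preceding paragraph --- exhibiting a deterministic rule for updating the candidate wedge in response to each new query so that $\mathcal{R}_t$ remains non-empty throughout and the three constraints (consistency with past oracle reports, non-GAS-ness at every query, and the Lipschitz/boundedness envelope) hold simultaneously for the eventual $f$.
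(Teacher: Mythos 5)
Your high-level framing (resisting oracle, single-coordinate default, a wedge to make the hard instance bounded below, using $d\geq T+1$ to hide structure in a fresh direction, a final rotation/orthogonality argument) matches the paper's skeleton, and you correctly flag the ``vanishing ambiguity region'' as the crux. But the invariant you propose to maintain---$\partial_\delta f(\bm{x}^{(s)}) = \{\bm{e}_1\}$ at every query, enforced by keeping the wedge at distance $>\delta$ from every query---cannot be maintained, and this is precisely the obstruction the paper is built to overcome, not sidestep.

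The problem is not the wedge; it is the single-coordinate resisting default itself. To report ``slope $\bm{e}_1$, value $0$'' at every query while staying Lipschitz and bounded below, the completed $f$ must zigzag between queries along $x_1$ (as in the paper's construction \eqref{eq:F}). If the algorithm ever makes two queries $\bm{x}^{(i)}, \bm{x}^{(j)}$ with $0 < \|\bm{x}^{(i)}-\bm{x}^{(j)}\| < \delta$---which a malicious deterministic $A$ can always do when fed a constant stream of ``$x_1$'' reports---then the downward kink of that zigzag is forced into $\mathbb{B}_\delta(\bm{x}^{(i)})$, so $\partial_\delta f(\bm{x}^{(i)})$ contains both $\bm{e}_1$ and $-\bm{e}_1$ and hence $\bm{0}$. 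The paper's remark immediately after \eqref{eq:F} makes this explicit: query $\bm{0}$ then $\delta\bm{e}_1$, and both points become $(0,\delta)$-GAS of $F$. No placement of a far-away wedge can undo this, because the offending kink lies in the default part of $f$, not in the wedge. Placing a single V-shape out at large negative $x_1$ also runs into the budget $f(\bm{0})-\inf f\leq C$ once $A$ queries far in the $-x_1$ direction; orienting it along a fresh coordinate $\bm{e}_j$ does not save it either, since $\max\{x_1,-x_1+\alpha x_j+\beta\}$ is still unbounded below unless $\alpha=0$.

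The paper resolves this with a qualitatively different idea: it does \emph{not} try to keep $\partial_\delta f(\bm{x}^{(s)})$ a singleton. Instead, the wedge $\tilde h$ is attached to \emph{every} queried $x_1$-value simultaneously (the outer $\max$ in $\widetilde H$), and $\tilde h$ is shaped so that, outside a small linear region $S_3$ where $\nabla h=\bm{e}_1$, the full gradient lies in $[-1,0]\times[\tfrac12,2]$ (\Cref{lem:wedge1dprop}). The $x_1$-components of $\partial_\delta H(\bm{x}^{(s)})$ \emph{do} range over $[-1,1]$ and bracket the origin, exactly as you worried---but the $x_2$-component is uniformly at least $\tfrac12$ on the nonlinear part, so the minimum-norm element of the convex hull stays at distance $\geq\tfrac1{\sqrt{17}}$ from $\bm{0}$ (\Cref{lem:noGAS-4,lem:numineq1}). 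That is what gives the dimension-free absolute constant, and it is the step missing from your proposal: a wedge that shrinks with $\sigma$ (the query separation) yet preserves a fixed-precision barrier against GAS at the very points being queried, rather than a wedge pushed outside the $\delta$-balls where it could never reach.
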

\Cref{thm:find} shows that randomization is provably helpful in the dimension-free computation of GAS. It also reveals a fundamental hurdle of nonconvex nonsmooth problems in the modern large-scale setting and their infinite-dimensional extension.

Without the dimension-free requirement, we have the following impossibility result:
\begin{theorem}[deterministic general zero-respecting]\label{thm:findzr}
Suppose that $0<\epsilon, \delta < \frac{1}{\sqrt{17}}$ and $C\geq 6$.
For any $T < +\infty$ and $d \geq 2$, we have
\[
\inf_{A \in \mathcal{A}_\textnormal{det-gzr}} \sup_{f \in \mathcal{F}_{C,d}^{\textnormal{Lip}}}\  \min_{t \in [T]} \ \dist\Big(0, \partial_\delta f\left(\bm{x}^{{A[f]},(t)} \right)\Big) > \epsilon.
\]	
\end{theorem}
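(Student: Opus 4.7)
My plan is to prove the theorem by constructing, for any fixed $A \in \mathcal{A}_{\text{det-gzr}}$, an adversarial function $f^\star \in \mathcal{F}_{C,2}^{\text{Lip}}$ such that no iterate of $A$ on $f^\star$ is an $(\epsilon,\delta)$-GAS point. The construction has two ingredients: a \emph{rigidity} step that confines iterates to the $x_1$-axis, and a \emph{wedge} step that keeps $\partial_\delta f^\star$ separated from $0$ along that axis.

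The rigidity step exploits $\supp(\bm{x}^{(1)}) = \emptyset$ (which forces $\bm{x}^{(1)} = \bm{0}$) together with the zero-respecting inclusion. An induction on $t$ shows that if $f^\star$ is locally $x_2$-invariant in a positive-radius ball around every previously queried axis point, then every iterate has the form $\bm{x}^{(t)} = (a^{(t)},0)$. It therefore suffices to make $f^\star$ a function of $x_1$ alone on a thin horizontal strip $\{|x_2| < \nu_0\}$ with $\nu_0 < \delta$. Outside this strip, $f^\star$ is extended to a piecewise-linear wedge designed so that at every axis iterate $\bm{x}^{(t)}$, every Clarke subgradient $\bm{s} \in \partial f^\star(\bm{y})$ with $\bm{y} \in \mathbb{B}_\delta(\bm{x}^{(t)})$ satisfies $s_1 > \epsilon$. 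In-strip subgradients have the form $(\tilde f'(y_1),0)$ for the axis restriction $\tilde f(\cdot) \coloneqq f^\star(\cdot,0)$, and they lie in $\{s_1 > \epsilon\}$ provided $\tilde f' > \epsilon$ on $(a^{(t)}-\delta, a^{(t)}+\delta)$; off-strip subgradients, e.g.\ of the form $(1,\pm 4)$, can be tuned into the same half-plane. A separating hyperplane argument then yields $\dist(0, \partial_\delta f^\star(\bm{x}^{(t)})) > \epsilon$.

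The main obstacle is simultaneously bounding $\inf f^\star \geq f^\star(\bm 0) - C$ and keeping $\tilde f' > \epsilon$ on $\bigcup_t (a^{(t)}-\delta, a^{(t)}+\delta)$: a globally increasing $\tilde f$ would drive $\inf \tilde f = -\infty$, yet any non-positively sloped ``floor'' piece placed on the $x_1$-axis creates a kink where the Clarke subdifferential contains both a positive- and a non-positive-slope vector, placing $0$ in the convex hull and producing a spurious GAS that defeats the adversary. The wedge construction resolves this by routing the bounded-below envelope of $f^\star$ through the region $\{|x_2|>\delta\}$, so the kink between upper and lower pieces---and hence the Ekeland-guaranteed approximate minimizer---lies off the $x_1$-axis and outside $\bigcup_t \mathbb{B}_\delta(\bm{x}^{(t)})$, inaccessible to any zero-respecting $A$. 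This geometric routing must be compatible with a finite Lipschitz budget and a resisting-oracle argument that keeps the $A$-iterate range bounded, which is what produces the tight quantitative thresholds $\epsilon,\delta < 1/\sqrt{17}$ and $C \geq 6$: wedge pieces with gradient $(1,\pm 4)$ have norm $\sqrt{17}$, and $C=6$ is the smallest Lipschitz ceiling under which the balance between strip width, subgradient floor $\epsilon$, and the $x_2$-offset of the lower envelope remains feasible.
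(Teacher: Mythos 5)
Your rigidity argument (zero-respecting plus a locally $x_2$-invariant resisting answer forces $\bm{x}^{(t)} = (a^{(t)},0)$) and the high-level geometric picture (re-route the bounded-below ``floor'' of the adversary off the $x_1$-axis so that no approximate minimizer lands in $\bigcup_t \mathbb{B}_\delta(\bm{x}^{(t)})$) both match the paper's strategy. You also correctly identify the central tension: the resisting-oracle value commitment at the iterates versus $\inf f \ge f(\bm 0) - C$. However, the concrete mechanism you propose for resolving it --- arrange that \emph{every} Clarke subgradient $\bm{s}\in\partial f^\star(\bm y)$ with $\bm y\in\mathbb{B}_\delta(\bm{x}^{(t)})$ has $s_1>\epsilon$, then separate by the hyperplane $\{s_1=\epsilon\}$ --- cannot work, no matter how the off-strip pieces are tuned. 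The algorithm controls the queries; it may simply place $T$ iterates along the axis at spacing below $2\delta$, so that $\bigcup_t\mathbb{B}_\delta(\bm{x}^{(t)})$ contains a connected axis segment of length $\Theta(\delta T)$. By Lebourg's mean value theorem, $s_1>\epsilon$ on a connected set forces $f(\,\cdot\,,0)$ to increase by more than $\epsilon\cdot\Theta(\delta T)$ across the segment, which for $T\gtrsim C/(\epsilon\delta)$ exceeds the budget $C$, whatever values the oracle commits at the iterates. Equivalently, with the standard commitment $f(\bm{x}^{(t)})=0$, any two iterates within $2\delta$ of each other already yield, via the one-dimensional Rolle/MVT, a point on the axis inside one of the $\delta$-balls with $s_1=0$. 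So your half-space cannot exist.

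The paper's wedge does not try to keep $s_1>\epsilon$ off the local patch; the off-patch gradients live in $[-1,0]\times[\tfrac12,2]$ (e.g.\ $(0,1)$, $(-1,2)$, $(0,\tfrac12)$), which deliberately have $s_1\le 0$. This is what lets the adversary flatten in $x_1$ between consecutive wedges (on the axis the function becomes the constant $-\eta/2$ with gradient $(0,1)$) while still descending in the $x_2$ direction towards a floor at $x_2\approx -5$, well outside any $\delta$-ball of an on-axis iterate. The separation from $0$ then comes not from a coordinate half-space but from the minimum-norm element of $\conv\big(\bm{e}_1,\ [-1,0]\times[\tfrac12,2]\big)$, which a small optimization (Lemma 3.12 of the paper) shows equals $\tfrac{1}{17}(1,4)$ with norm $1/\sqrt{17}$. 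That is where the threshold comes from --- as the achieved min-norm direction, not as the norm of a $(1,\pm4)$ wedge gradient, which the construction does not in fact use. Your writeup as it stands therefore has a genuine gap: the key ``resolution of approximate stationarity'' lemma, which is the technical heart of the theorem, cannot be proved by your proposed hyperplane argument and needs the min-norm-over-convex-hull analysis instead.
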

\Cref{thm:findzr} points out that any finite-time deterministic method for GAS must be significantly different from most of the commonly used algorithmic scheme in smooth optimization. Thus, even for finite-time computation of GAS, new algorithmic ideas are necessary.
\begin{remark}\label{rmk:asym}
It is notable that the hard construction in the proof of \Cref{thm:findzr} is semi-algebraic (see \Cref{rmk:fncs}) and \citet{davis2020stochastic} showed that every limiting point of the vanilla subgradient method is a Clarke stationary point for any semi-algebraic $f$, i.e., $\limsup_{t\rightarrow +\infty} \{\bm{x}^{(t)} \}\subseteq\{\bm{x}: 0 \in \partial f(\bm{x})\}$. 
By passing to a convergent subsequence of $\{\bm{x}^{(t)}\}_t$ if necessary, it is evident to see that, for any $\delta > 0$, there exists $T<+\infty$ such that $\bm{x}^{(T)} \in \mathbb{B}_\delta \left(\bm{x}^{(\infty)}\right)$. Then, by definition, $\bm{x}^{(T)}$ is $(0,\delta)$-GAS, which seems a contradiction to \Cref{thm:findzr} as vanilla subgradient method is clearly deterministic and  general zero-respecting. The subtlety is that \Cref{thm:findzr} rules out any $A \in \mathcal{A}_{\textnormal{det-gzr}}$ with \emph{a priori} finite-time complexity. Such an algorithm needs to promise the same finite $T$ uniformly for all $f \in \mathcal{F}_{C,d}^{\textnormal{Lip}}$. While the result of \citep{davis2020stochastic} implies that for any semi-algebraic $f$ there exists $T<+\infty: \bm{x}^{(T)}$ is $(0,\delta)$-GAS, \Cref{thm:findzr} shows that any \emph{a priori} estimation of  $T$ is impossible.
\end{remark}

\subsection{Proofs}\label{sec:prfs}

For simplicity, we will assume that $L\geq 3, 0\leq \epsilon,\delta < \frac{1}{2}, d \geq 2$, and $\nu>0$ would be a sufficient small quantity whose value may change from line to line.
Besides, we assume that any algorithm starts from $\bm{0}$. Formally, $\forall A \in \mathcal{A}_{\textnormal{det}}, G \in \mathcal{F}_{C,d}^{\textnormal{Lip}}: \bm{x}^{A[G],(1)} = \bm{0}$, which is common in the literature  \cite{carmon2019lower,kornowski2021oracle} and without loss of generality as $\forall G(\bm{x}) \in \mathcal{F}_{C,d}^{\textnormal{Lip}}$, we have  $G(\bm{x} - \bm{x}^{(1)}) \in \mathcal{F}_{C,d}^{\textnormal{Lip}}$.

\subsubsection{The Construction}\label{sec:construction}

\paragraph{Single Coordinate Resisting Function.}

We first adopt a resisting function using the classic resisting oracle of \cite[Theorem 1.1.2]{nesterov2018lectures}. The construction is similar to \citep{zhang2020complexity,vavasis1993black} and we repeat the argument for completeness.
Fix $T < +\infty$ and dimension $d$.
For every query $\bm{x}^{A[F],(t)}\in \mathbb{R}^d, t\in[T]$ from algorithm $A\in\mathcal{A}_{\textnormal{det}}$, the resisting oracle will always return
\[
F\left(\bm{x}^{A[F],(t)}\right) = 0, \qquad \mathcal{O}_F\left(\bm{x}^{A[F],(t)}\right) = x_1 - x_1^{A[F],(t)}.
\]
Then, we show there exists an $F\in \mathcal{F}_{1,d}^{\textnormal{Lip}}$ that is compatible with such a resisting oracle. Collect, reorder, and eliminate duplicate values of $x^{A[F],(t)}_1,\forall t \in [T]$ in increasing order.  Denote the resulting sequence as $\left\{x_1^{(i)}\right\}_{i=1}^{T'}$.
We have $x_1^{(1)}<x_1^{(2)}<\cdots < x_1^{(T')}$.
Let
\[
\sigma \coloneqq \min\left\{ \min_{i\in[T'],j\in[T']} \left|x^{(i)}_1 - x^{(j)}_1\right|, 1\right\}.
\]
For $\bm{x}\in\mathbb{R}^d$, we define $F:\mathbb{R}^d \rightarrow \mathbb{R}$ as
\[\label{eq:F}
F(\bm{x}) \coloneqq     \left\{ \begin{array}{rcl}
         -x_1 + x_1^{(1)} - \frac{\sigma}{2} & \mbox{for} & x_1 \in \left( -\infty, x_1^{(1)} - \frac{\sigma}{4}\right), \\
         x_1 - x^{(t)}_1  & \mbox{for} & x_1 \in \left[ x_1^{(t)} - \frac{\sigma}{4}, \frac{1}{2}\left( x_1^{(t)} + x_1^{(t+1)} \right)  - \frac{\sigma}{4}\right), t \in [T'-1], \\
         -x_1 + x_1^{(t+1)} - \frac{\sigma}{2} & \mbox{for} & x_1 \in \left[ \frac{1}{2}\left( x_1^{(t)} + x_1^{(t+1)} \right)  - \frac{\sigma}{4}, x_1^{(t+1)} - \frac{\sigma}{4}\right), t \in [T'-1], \\
         x_1 - x_1^{(T')} & \mbox{for} &  x_1 \in \left[ x_1^{(T')} - \frac{\sigma}{4}, +\infty\right).
                \end{array}\right. \tag{$\sharp$}
\]
It is easy to see that $F$ is $1$-Lipschitz continuous and $F(\bm{0}) - \inf_{\bm{x}} F(\bm{x}) \leq 1 \leq C$. 
\begin{lemma}\label{lem:Flocal}
For any $t\in[T']$ and $\bm{x} \in \mathbb{B}^1_{\frac{\sigma}{8}}\left(x_1^{(t)}\right) \otimes \mathbb{R}^{d-1}$, it holds that
	$F(\bm{x}) = x_1 - x_1^{(t)}$.
\end{lemma}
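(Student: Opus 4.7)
The statement is essentially a ``piece identification'' claim: within a radius-$\sigma/8$ ball around $x_1^{(t)}$ in the first coordinate, only the ascending piece $F(\bm x)=x_1-x_1^{(t)}$ from the definition \eqref{eq:F} is active. The plan is simply to verify that the interval $[x_1^{(t)}-\sigma/8,\, x_1^{(t)}+\sigma/8]$ sits strictly inside one of the intervals on which this piece is declared, and to do this by a separate check for $t=T'$ and for $t\in[T'-1]$. The only quantitative fact used is the gap bound
\[
x_1^{(t+1)}-x_1^{(t)}\;\geq\;\sigma \qquad \text{for all } t\in[T'-1],
\]
which is immediate from the definition of $\sigma$ together with the fact that the $x_1^{(i)}$ were sorted and de-duplicated.

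\textbf{Step 1 (case $t\in[T'-1]$).} The relevant piece from \eqref{eq:F} is
\[
F(\bm x)=x_1-x_1^{(t)} \qquad \text{for } x_1\in \Bigl[x_1^{(t)}-\tfrac{\sigma}{4},\, \tfrac{1}{2}(x_1^{(t)}+x_1^{(t+1)})-\tfrac{\sigma}{4}\Bigr).
\]
The lower endpoint is clear since $\sigma/8<\sigma/4$. For the upper endpoint, rewrite
\[
\tfrac{1}{2}\bigl(x_1^{(t)}+x_1^{(t+1)}\bigr)-\tfrac{\sigma}{4} \;=\; x_1^{(t)}+\tfrac{1}{2}\bigl(x_1^{(t+1)}-x_1^{(t)}\bigr)-\tfrac{\sigma}{4} \;\geq\; x_1^{(t)}+\tfrac{\sigma}{2}-\tfrac{\sigma}{4} \;=\; x_1^{(t)}+\tfrac{\sigma}{4},
\]
which is strictly greater than $x_1^{(t)}+\sigma/8$.

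\textbf{Step 2 (case $t=T'$).} The relevant piece is now $F(\bm x)=x_1-x_1^{(T')}$ for $x_1\in[x_1^{(T')}-\sigma/4,+\infty)$, which trivially contains the interval $[x_1^{(T')}-\sigma/8,\,x_1^{(T')}+\sigma/8]$.

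\textbf{Step 3.} Combining Steps~1 and~2, for every $\bm x\in\mathbb{B}^1_{\sigma/8}(x_1^{(t)})\otimes\mathbb{R}^{d-1}$ the first coordinate $x_1$ lies in the half-open interval on which the piece $x_1-x_1^{(t)}$ is declared by \eqref{eq:F}; since $F$ only depends on $x_1$, the value at $\bm x$ is $x_1-x_1^{(t)}$, as claimed.

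\textbf{Expected main obstacle.} There is no real obstacle; the only thing to be careful about is the bookkeeping with half-open intervals and the inclusive/exclusive endpoints in \eqref{eq:F}, which is why the $\sigma/8$ cushion (strictly smaller than the $\sigma/4$ used in \eqref{eq:F}) was chosen. The argument is purely a verification that the cushion is large enough in light of the gap bound $x_1^{(t+1)}-x_1^{(t)}\ge\sigma$.
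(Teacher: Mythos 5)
Your proof is correct and takes essentially the same approach as the paper: verify that the radius-$\sigma/8$ interval around $x_1^{(t)}$ lies strictly inside the interval on which the piece $x_1-x_1^{(t)}$ is declared, using the gap bound $x_1^{(t+1)}-x_1^{(t)}\geq\sigma$. Your Step~2 explicitly treats the boundary case $t=T'$, which the paper's one-line proof elides (its inequality chain involves $x_1^{(t+1)}$, undefined when $t=T'$); this is a minor but welcome tightening.
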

\begin{proof} Because for such $\bm{x}$,
	$
	x_1^{(t)} - \frac{\sigma}{4} < x_1^{(t)} - \frac{\sigma}{8} \leq x_1 \leq x_1^{(t)} + \frac{\sigma}{8} < x_1^{(t)} + \frac{\sigma}{4} \leq \frac{1}{2}\left( x_1^{(t)} + x_1^{(t+1)} \right)  - \frac{\sigma}{4}.
	$
\end{proof}
Thus, $F$ is compatible with the resisting oracle by definition of $\left\{x_1^{(i)}\right\}_{i=1}^{T'}$ and \Cref{lem:Flocal}.

\begin{remark}
 For any $\delta > 0$, a deterministic procedure will compute $(0,\delta)$-GAS of $F$ as follows: Query $\bm{x}^{(0)} = \bm{0}$, then query $\bm{x}^{(1)} = \delta\cdot \bm{e}_1$. Then, both $\bm{x}^{(0)}$ and $\bm{x}^{(1)}$ are $(0,\delta)$-GAS of $F$.
 \end{remark}

\paragraph{A ``Wedge'' Replacement.}

\begin{figure}[t]
\centering
  \includegraphics[width=0.45\textwidth]{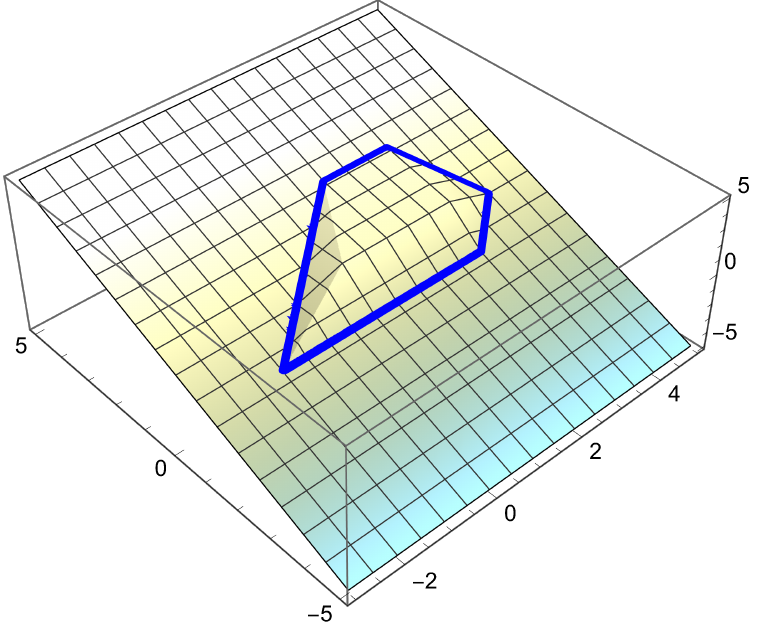}
	\hspace{4em}
  \includegraphics[width=0.35\textwidth]{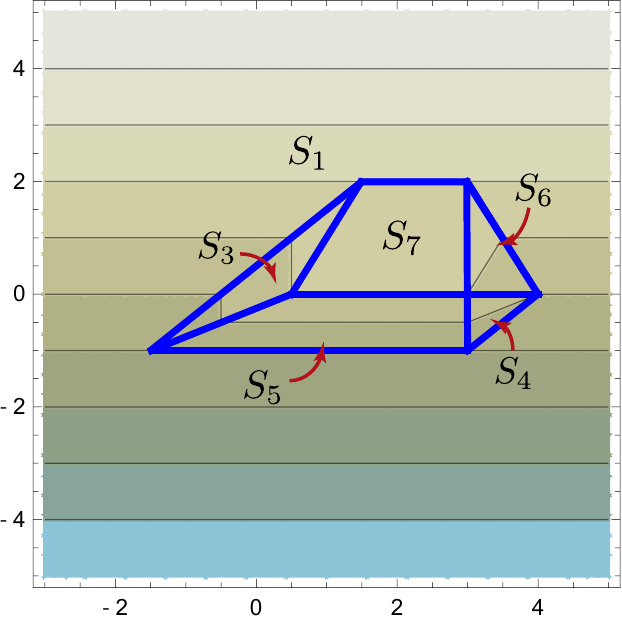}
  \caption{The ``wedge''-shaped resisting function.}\label{fig:wedge}
\end{figure}

In this section, we will build a resisting function with a ``wedge''-like shape.
Our main building block is a $\mathbb{R}^2\rightarrow\mathbb{R}$ ``wedge'' function. For $0<\eta \leq \frac{\sigma}{32}$, we define:
\[
	h(x,y) 
	\coloneqq  \max\left\{ \underbrace{y-\frac{\eta}{2}}_\O1, \textcolor{green!50!black}{ \tilde{h}(x,y) \coloneqq \min\left\{ \underbrace{x + \frac{\eta}{2} }_\O2, \underbrace{2y +\eta\vphantom{\frac{\eta}{2}}}_\O3, \underbrace{\frac{y}{2}+\eta}_\O4 \right\} + \min\left\{\underbrace{-x+\frac{5\eta}{2}}_\O5,\underbrace{- \frac{\eta}{2}
}_\O6\right\} } \right\}.
\]
The following piecewise representation of $h$ is more convenient for analysis.
\[\label{eq:pieceH}
h(x,y) =     \left\{ \begin{array}{rcl}
         y-\frac{\eta}{2} & \mbox{for} & (x,y) \in S_1 \coloneqq \{(x,y): h(x,y)=\O1\}, \\
         3\eta  & \mbox{for} & (x,y) \in S_2\coloneqq \{(x,y): h(x,y)=\O2+\O5\}, \\
         x  & \mbox{for} & (x,y) \in S_3 \coloneqq \{(x,y): h(x,y)=\O2+\O6\}, \\
         -x+2y+\frac{7}{2}\eta  & \mbox{for} & (x,y) \in S_4 \coloneqq \{(x,y): h(x,y)=\O3+\O5\}, \\
         2y+\frac{1}{2}\eta  & \mbox{for} & (x,y) \in S_5 \coloneqq \{(x,y): h(x,y)=\O3+\O6\}, \\
         -x+\frac{y}{2}+\frac{7\eta}{2}  & \mbox{for} & (x,y) \in S_6  \coloneqq \{(x,y): h(x,y)=\O4+\O5\}, \\
         \frac{y}{2}+\frac{\eta}{2}  & \mbox{for} & (x,y) \in S_7 \coloneqq \{(x,y): h(x,y)=\O4+\O6\}. \\
                \end{array}\right. \tag{$\natural$}
\]

The following facts concerning the partitions $\{S_i\}_i$ are useful for further analysis.
\begin{lemma}\label{lem:s2}
	$S_2 = \emptyset$.
\end{lemma}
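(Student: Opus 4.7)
}
The plan is a direct unpacking of the three conditions that must simultaneously hold for a point $(x,y)$ to lie in $S_2$, followed by a short numerical contradiction. By definition, $(x,y)\in S_2$ means (i) the outer $\max$ in $h$ is attained by $\tilde{h}$, i.e., $\tilde{h}(x,y)\geq \O1 = y-\frac{\eta}{2}$; (ii) in the inner $\min\{\O2,\O3,\O4\}$, the piece $\O2 = x+\frac{\eta}{2}$ is the minimizer, so in particular $\O2 \leq \O4$, i.e., $x+\frac{\eta}{2}\leq \frac{y}{2}+\eta$; and (iii) in the inner $\min\{\O5,\O6\}$, the piece $\O5=-x+\frac{5\eta}{2}$ is the minimizer, so $\O5\leq \O6$, i.e., $-x+\frac{5\eta}{2}\leq -\frac{\eta}{2}$.

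First, condition (iii) immediately gives $x\geq 3\eta$. Plugging this into condition (ii) yields $\frac{y}{2}\geq x-\frac{\eta}{2}\geq 3\eta-\frac{\eta}{2}=\frac{5\eta}{2}$, so $y\geq 5\eta$. Second, since on $S_2$ we have $\tilde{h}(x,y)=\O2+\O5=3\eta$, condition (i) forces $3\eta\geq y-\frac{\eta}{2}$, i.e., $y\leq \frac{7\eta}{2}$. Combining the two bounds on $y$ gives $5\eta\leq y\leq \frac{7\eta}{2}$, which is impossible for $\eta>0$. Hence no $(x,y)$ satisfies all three requirements, and $S_2=\emptyset$.

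This is essentially a three-line verification once the membership conditions are read off from the definition of $h$ and the piecewise representation \eqref{eq:pieceH}; I do not foresee any real obstacle. The only place where one has to be slightly careful is in recognizing that the label $h(x,y)=\O2+\O5$ encodes the simultaneous selection of $\O2$ as the minimizer of $\{\O2,\O3,\O4\}$ and $\O5$ as the minimizer of $\{\O5,\O6\}$, together with $\tilde{h}\geq \O1$; once this triple condition is correctly extracted, the arithmetic of the offsets $\frac{\eta}{2},\eta,\frac{5\eta}{2}$ appearing in $\O2,\O4,\O5,\O6$ forces the incompatible bounds $y\geq 5\eta$ and $y\leq \frac{7\eta}{2}$.
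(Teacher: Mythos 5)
Your proof is correct and takes essentially the same route as the paper's: both extract the membership conditions $\O5\leq\O6$ and $\O2\leq\O4$ to derive $x\geq3\eta$ and $y\geq5\eta$, then contradict the outer-max condition $\O1\leq\tilde h$. The only cosmetic difference is that you note $\tilde h\equiv 3\eta$ on $S_2$ and compare directly, whereas the paper bounds $\tilde h\leq\O4+\O6$ and shows $\O1-\O4-\O6>0$; the arithmetic is equivalent.
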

\begin{proof}
Let us first examine 
\[
S_2=\{(x,y): \O1 \leq (\O2 \wedge \O3 \wedge \O4 ) + (\O5 \wedge \O6), \O2 \leq \O3 \wedge \O4, \O5 \leq \O6 \}.
\]	
Suppose that $(x,y) \in S_2$.
By $\O5 \leq \O6$, we know $x \geq 3 \eta$. Due to $\O2 \leq \O3 \wedge \O4$, we get $x \leq \min \{ \frac{y}{2}, 2y\} + \frac{\eta}{2}$. Thus, it holds that $0<3\eta \leq x \leq \frac{y}{2} + \frac{\eta}{2}$ and $\frac{y}{2} \geq \frac{5\eta}{2}$.
We compute
\[
\O1 - (\O2 \wedge \O3 \wedge \O4 ) - (\O5 \wedge \O6) \geq \O1 - \O4 - \O6 = \frac{y}{2} - \eta \geq \frac{3\eta}{2} > 0,
\]
which gives the contradiction.
\end{proof}

\begin{lemma}\label{lem:s3}
	$S_3 = \left\{(x,y): -\frac{\eta}{2}+y \leq x \leq \frac{\eta}{2} +  \min\left\{2y, \frac{y}{2} \right\} \right\} \subseteq [-\frac{3}{2}\eta, \frac{3}{2}\eta ] \times [-\eta,2\eta]$.
\end{lemma}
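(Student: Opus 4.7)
The plan is to unpack the definition of $S_3 = \{(x,y) : h(x,y) = \O2 + \O6\}$ directly from the piecewise formula \eqref{eq:pieceH}. Membership requires four simultaneous conditions: the minimum in the first bracket of $\tilde h$ is attained by $\O2$ (so $\O2 \leq \O3$ and $\O2 \leq \O4$), the minimum in the second bracket is attained by $\O6$ (so $\O6 \leq \O5$), and the outer maximum is achieved by $\tilde h$ rather than $\O1$ (so $\O1 \leq \O2 + \O6$). Substituting the definitions of $\O1$--$\O6$ converts these into the four linear inequalities
\[
x \leq 2y + \tfrac{\eta}{2}, \qquad x \leq \tfrac{y}{2} + \tfrac{\eta}{2}, \qquad x \leq 3\eta, \qquad x \geq y - \tfrac{\eta}{2}.
\]

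The next step is to verify that the third inequality $x \leq 3\eta$ (coming from $\O6 \leq \O5$) is automatically implied by the other three, so that the description of $S_3$ reduces to the stated form $\{-\tfrac{\eta}{2}+y \leq x \leq \tfrac{\eta}{2} + \min\{2y, y/2\}\}$. This follows by combining the lower bound $x \geq y - \tfrac{\eta}{2}$ with the two upper bounds: pairing with $x \leq \tfrac{y}{2} + \tfrac{\eta}{2}$ forces $y \leq 2\eta$, and pairing with $x \leq 2y + \tfrac{\eta}{2}$ forces $y \geq -\eta$. Within the range $y \in [-\eta, 2\eta]$, the quantity $\min\{2y, y/2\} + \tfrac{\eta}{2}$ never exceeds $\tfrac{3\eta}{2}$, which is strictly less than $3\eta$, so the redundancy is established.

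For the containment $S_3 \subseteq [-\tfrac{3}{2}\eta, \tfrac{3}{2}\eta] \times [-\eta, 2\eta]$, the $y$-range has already been extracted in the preceding step. The $x$-range then follows from $y - \tfrac{\eta}{2} \leq x \leq \min\{2y, y/2\} + \tfrac{\eta}{2}$ by checking the extreme values: the lower bound $y - \tfrac{\eta}{2} \geq -\tfrac{3\eta}{2}$ holds because $y \geq -\eta$, and the upper bound $\min\{2y, y/2\} + \tfrac{\eta}{2} \leq \tfrac{3\eta}{2}$ holds because $y \leq 2\eta$ (so $y/2 \leq \eta$ when $y \geq 0$, and $2y \leq 0 \leq \eta$ when $y < 0$).

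The only non-mechanical step is confirming the redundancy of $x \leq 3\eta$; everything else is a direct translation of max/min conditions into affine inequalities and a short interval computation. I do not anticipate any genuine obstacle, since all inequalities involved are linear in $x$ and $y$ with explicit coefficients, and the bounds match up cleanly.
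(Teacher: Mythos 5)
Your proof is correct and follows essentially the same route as the paper's: translate the max/min structure of $h$ into the four affine inequalities defining $S_3$, derive $y\in[-\eta,2\eta]$ and $x\in[-\tfrac{3}{2}\eta,\tfrac{3}{2}\eta]$ from them, and observe that the constraint $x\leq 3\eta$ from $\O5\geq\O6$ is thereby redundant. Your argument is merely a little more explicit about the sign-of-$y$ case split when bounding $\min\{2y,y/2\}$.
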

\begin{proof}
Note that
\[
S_3=\left\{(x,y): \O1 \leq \O2+ \O6, \O2 \leq \O3 \wedge \O4, \O5 \geq \O6 \right\}.
\]
By $\O2 \leq \O3 \wedge \O4$, we have $x \leq \min \{ \frac{y}{2}, 2y\} + \frac{\eta}{2}$. From $\O1 \leq \O2+ \O6$, it holds $y-\frac{\eta}{2}\leq x$. Then, we have $-\eta \leq y \leq 2\eta$ and $-\frac{3}{2}\eta \leq x \leq \frac{3}{2}\eta$. Thus, the constraint $x \leq 3\eta$ in $\O5\geq \O6$ is always satisfied.
\end{proof}

\begin{lemma}\label{lem:wedge1dprop} It holds that
	 $\partial h(x,y) \subseteq \left[ \begin{array}{c}
    {[-1, 0]} \\ {[\frac{1}{2}, 2]}  \end{array} \right]$ if $(x,y) \in S_3^c$.	
\end{lemma}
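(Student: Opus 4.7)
The plan is to read off the possible gradient vectors of $h$ directly from the piecewise representation \eqref{eq:pieceH} and then apply the definition of the Clarke subdifferential. By inspection of \eqref{eq:pieceH}, on each non-empty region $S_i$ the function $h$ restricts to an affine function with constant gradients
\[
\nabla h|_{S_1}=(0,1),\ \nabla h|_{S_3}=(1,0),\ \nabla h|_{S_4}=(-1,2),\ \nabla h|_{S_5}=(0,2),\ \nabla h|_{S_6}=(-1,\tfrac{1}{2}),\ \nabla h|_{S_7}=(0,\tfrac{1}{2}),
\]
while $S_2=\emptyset$ by \Cref{lem:s2}. Every one of these gradients \emph{except} the $S_3$-gradient $(1,0)$ already lies in the convex rectangle $R\coloneqq [-1,0]\times[\frac{1}{2},2]$, so the entire task is to rule out the $S_3$-gradient from contributing to $\partial h(x,y)$ whenever $(x,y)\in S_3^c$.

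Next, I would argue that $S_3$ is closed, so $S_3^c$ is open. Indeed, from \eqref{eq:pieceH} (or directly from $\O2+\O6=x$), $S_3=\{(x,y): h(x,y)=x\}$ is the zero level set of the continuous map $(x,y)\mapsto h(x,y)-x$. Hence every $(x,y)\in S_3^c$ admits an open neighborhood $U\ni(x,y)$ with $U\cap S_3=\emptyset$, and at every point of differentiability of $h$ inside $U$ the gradient belongs to the five-element set $\{(0,1),(-1,2),(0,2),(-1,\frac{1}{2}),(0,\frac{1}{2})\}\subseteq R$.

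Finally, by \Cref{def:subc}, $\partial h(x,y)$ is the convex hull of limit points of sequences $\nabla h(x_n,y_n)$ with $(x_n,y_n)\to(x,y)$ along which $h$ is differentiable; eventually $(x_n,y_n)\in U$, so every such limit lies in $R$, and convexity of $R$ then gives $\partial h(x,y)\subseteq R$ as required. The only real subtlety is verifying that $S_3$ is closed, which is exactly what keeps the offending gradient $(1,0)$ from leaking in via a nearby differentiability point; the remainder is straightforward bookkeeping of the five surviving gradient vectors against the rectangle $R$.
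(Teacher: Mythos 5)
Your proof is correct and follows essentially the same route as the paper's: read the constant gradients off the piecewise representation \eqref{eq:pieceH}, observe every gradient except the $S_3$-gradient $(1,0)$ lies in the convex rectangle $[-1,0]\times[\tfrac12,2]$, and then pass to the Clarke subdifferential by convexity. The one place you are more explicit than the paper is in isolating the closedness of $S_3$ (as the zero level set of the continuous map $(x,y)\mapsto h(x,y)-x$) as the reason the $(1,0)$ gradient cannot leak in at a point of $S_3^c$; the paper instead notes that the pairwise overlaps $S_i\cap S_j$ are Lebesgue-null and leaves the openness of $S_3^c$ implicit, so your write-up is, if anything, slightly cleaner on this point.
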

\begin{proof}
	By \Cref{lem:s2} and the piecewise characterization of $h$ in \eqref{eq:pieceH}, we know that
	\[
	\nabla h(x,y) \subseteq \left[ \begin{array}{c}
    {[-1, 0]} \\ {[\frac{1}{2}, 2]}  \end{array} \right] \qquad \textnormal{for any} \qquad (x,y) \in \bigcup_{i\neq 3} \intt(S_i).
	\]
	It is easy but tedious to verify that $S_i \cap S_j$ has zero Lebesgue measure for any $i\neq j$ (see \Cref{fig:wedge}), as they are solutions of nondegenerate linear equations.
	Taking a convex hull with \citep[Theorem 9.61]{rockafellar2009variational} and using $\conv(A\times B) = \conv(A) \times \conv( B)$ complete the proof.
\end{proof}

Now, we will proceed to the final construction.
Let $\widetilde{H}:\mathbb{R}^2 \rightarrow \mathbb{R}$ be defined as
\[
\widetilde{H}(x_1,x_2) \coloneqq \max\left\{x_2- \frac{\eta}{2}, \max_{t\in [T']} \tilde{h}\left(x_1 - x^{(t)}_1, x_2\right) \right\},
\]
where $\left\{x_1^{(i)}\right\}_{i=1}^{T'}$ are used to define $F$ in \eqref{eq:F}.
Then, the final hard  ``wedge'' construction $H:\mathbb{R}^d \rightarrow \mathbb{R}$ for $d \geq 2$ is defined as
\[
H(\bm{x}) \coloneqq \max\left\{-5,\widetilde{H}(x_1,x_2) \right\}.
\]

\begin{lemma}\label{lem:strictIneq}
	For any $t\in[T']$, it holds that
	\[
	x_1 - x_1^{(t)} = \tilde{h}\left(x_1 - x_1^{(t)}, x_2\right)
	> x_2 - \frac{\eta}{2} > \max_{t'\in[T']\backslash\{t\}} \tilde{h}\left(x_1 - x_1^{(t')}, x_2\right),\quad \forall \bm{x} \in \mathbb{B}_{\frac{\eta}{8}}^2 \left(\left[ \begin{array}{c}
    x_1^{(t)} \\ 0  \end{array} \right] \right).	
    \]
\end{lemma}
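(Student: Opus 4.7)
The plan is to substitute $u \coloneqq x_1 - x_1^{(t)}$ and $v \coloneqq x_2$, so that the hypothesis $\bm{x}\in\mathbb{B}^{2}_{\eta/8}(x_1^{(t)},0)$ becomes $|u|\leq \eta/8$ and $|v|\leq\eta/8$, and then verify each of the three assertions separately. All of the arithmetic will reduce to comparing linear quantities in $u,v$ that are controlled by $\eta/8$ against quantities of order $\eta$ or $\sigma$. The crucial external facts I will use are the piecewise formula \eqref{eq:pieceH} together with \Cref{lem:s3}, and the spacing condition $\eta \leq \sigma/32$ combined with the minimal separation $\sigma \leq |x_1^{(i)} - x_1^{(j)}|$ for $i\neq j$.

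First I will establish the equality $\tilde{h}(u,v) = u$. Since $|u|,|v|\le \eta/8$, the inequalities $-\eta/2 + v \leq -3\eta/8 \leq u$ and $u \leq \eta/8 \leq -5\eta/8 + \eta/2 \leq \min\{2v, v/2\} + \eta/2$ both hold, so by \Cref{lem:s3} we have $(u,v)\in S_3$ and hence $h(u,v)= u$. Since $h(u,v) = \max\{v-\eta/2, \tilde h(u,v)\}$ and $v - \eta/2 \leq -3\eta/8 < -\eta/8 \leq u = h(u,v)$, it follows that $h(u,v) = \tilde h(u,v)= u$, proving (i). The same chain of inequalities immediately yields (ii): $u \geq -\eta/8 > -3\eta/8 \geq v-\eta/2$.

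The main work is assertion (iii), where I must bound $\tilde h(x_1 - x_1^{(t')}, v)$ from above for every $t'\neq t$. Because $|x_1^{(t')}-x_1^{(t)}|\geq \sigma \geq 32\eta$ and $|u|\leq \eta/8$, the first coordinate $w\coloneqq x_1 - x_1^{(t')}$ satisfies $|w|\geq 32\eta - \eta/8 \geq 31\eta$. I split into two cases. If $w\geq 31\eta$, then $w + \eta/2$ is much larger than either $2v+\eta$ or $v/2+\eta$ (both within $[\tfrac{3}{4}\eta,\tfrac{5}{4}\eta]$ for $|v|\le \eta/8$), and $-w+\tfrac{5}{2}\eta \leq -28.5\eta < -\eta/2$, so
\[
\tilde h(w,v) = \min\{2v+\eta,\ v/2+\eta\} + (-w + \tfrac{5}{2}\eta) \leq \tfrac{5}{4}\eta - 28.5\eta < -27\eta < v - \tfrac{\eta}{2}.
\]
If instead $w\leq -31\eta$, then $w+\eta/2$ is the smallest term in the first $\min$ and $-\eta/2$ is the smaller term in the second, giving $\tilde h(w,v) = w + \eta/2 - \eta/2 = w \leq -31\eta < v - \eta/2$. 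Taking the maximum over $t'\neq t$ and combining with (i) and (ii) finishes the proof.

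The only step that is genuinely delicate is (iii), and specifically the bookkeeping of which of the six expressions $\O2,\dots,\O6$ attains the minimum in each half-space for $w$; once the spacing $|w|\geq 31\eta$ is secured from $\eta\leq \sigma/32$, the rest is a routine linear estimate. I anticipate no obstacle beyond keeping the constants straight, and I will present the case analysis in a short table or two aligned displays rather than grinding through each subregion individually.
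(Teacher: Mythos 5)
Your proof is correct and, for assertions (i)--(ii), takes essentially the route the paper does: reduce to $(u,v)\in\mathbb{B}^2_{\eta/8}(0,0)$, verify the $S_3$-membership inequalities from \Cref{lem:s3}, and conclude $h=\tilde h=u$ there. For assertion (iii) your route is genuinely different, though comparably elementary: you reason forward, using $|w|\geq\sigma-\eta/8 > 30\eta$ together with $\eta\leq\sigma/32$ to determine explicitly which branches of the two mins in $\tilde h$ are active in each of the two cases $w\geq 31\eta$ and $w\leq -31\eta$, and then bound $\tilde h(w,v)$ directly. The paper instead argues by contradiction: it supposes $x_2-\eta/2 \leq \tilde h(x_1-x_1^{(t')},x_2)$, extracts the constraints $-\eta\leq x_2\leq 2\eta$ and $-\tfrac{3}{2}\eta\leq x_1-x_1^{(t')}\leq 4\eta$ from $\O1\leq(\O3\wedge\O4)+\O6$, $\O1\leq(\O3\wedge\O4)+\O5$, and $\O1\leq\O2+\O6$, and contradicts $|x_1-x_1^{(t')}|>30\eta$. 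Your version has the advantage of exhibiting the value of $\tilde h$ explicitly; the paper's avoids a sign split by reading off constraints from the assumed inequality. Either is fine.

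One arithmetic slip to fix in your proof of (i): the displayed chain $u \leq \eta/8 \leq -5\eta/8 + \eta/2 \leq \min\{2v,v/2\}+\eta/2$ contains a false middle step, since $-5\eta/8+\eta/2 = -\eta/8 < \eta/8$. The correct lower bound is $\min\{2v,v/2\}\geq 2\cdot(-\eta/8) = -\eta/4$ for $|v|\leq\eta/8$, so $\min\{2v,v/2\}+\eta/2 \geq \eta/4 \geq \eta/8 \geq u$, which is the same chain used in the paper. The conclusion $(u,v)\in S_3$ is unaffected.
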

\begin{proof}
	For the first two relations, we note that
	\[
	x_2 - \frac{\eta}{2} \leq - \frac{3\eta}{8} < -\frac{\eta}{8} \leq x_1 - x_1^{(t)} \leq  \frac{\eta}{8} < \frac{\eta}{2} - \frac{\eta}{4} \leq \frac{\eta}{2} + \min\left\{2 x_2, \frac{x_2}{2}\right\},
	\]
	which implies that $\left(x_1 - x_1^{(t)},x_2\right)\in\mathbb{B}_{\frac{\eta}{8}}^2 (\bm{0}) \subset S_3$ by \Cref{lem:s3}. For the last strict inequality, suppose there exists $t' \in [T'], x_1^{(t')}\neq x_1^{(t)}$ such that the opposite holds.
	Then, we have $\O1 \leq(\O2 \wedge \O3 \wedge \O4 ) + (\O5 \wedge \O6)$. By $\O1 \leq(\O3 \wedge \O4 ) +  \O6$, we have $-\eta \leq x_2 \leq 2\eta$. With $\O1 \leq(\O3 \wedge \O4 ) +  \O5$, it holds that $x_1 - x_1^{(t')} \leq \min \{x_2, -\frac{x_2}{2}\} + 2\eta \leq 4\eta$. Due to $\O1 \leq \O2 + \O6$, we know $x_1 - x_1^{(t')} \geq x_2 - \frac{\eta}{2} \geq -\frac{3}{2}\eta$, which implies that $\left| x_1 - x_1^{(t')} \right|\leq 4\eta$. However,
	\[
	\left| x_1 - x_1^{(t')} \right| \geq \left| x_1^{(t)} - x_1^{(t')} \right| - |x_1| \geq \sigma - \frac{\eta}{8} > 30\eta,
	\]
	which gives the contradiction.

\end{proof}

The main lemma in this part is as follows:
\begin{lemma}\label{lem:HisF}
	The following hold.
	\begin{itemize}
		\item $H$ is $3$-Lipschitz continuous and $H(\bm{0}) - \inf_{\bm{x}} H(\bm{x}) \leq 6$.
		\item There exists a $\nu>0$ such that
		\[
		F(\bm{y}) = H(\bm{y}),\qquad \forall \bm{y} \in \bigcup_{t \in [T']} \mathbb{B}_\nu^2 \left(\left[ \begin{array}{c}
    x_1^{(t)} \\ 0  \end{array} \right] \right) \otimes \mathbb{R}^{d-2}.
		\]
	\end{itemize}
\end{lemma}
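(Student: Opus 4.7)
The plan is to handle the two bullets separately. The Lipschitz estimate and sublevel-gap bound are a layer-by-layer calculation through the nested $\max$/$\min$/$+$ operations that define $h$, $\widetilde H$, and $H$, combined with the trivial lower bound $H\geq -5$. The local identification of $F$ with $H$ reduces, via the decomposition $\widetilde H(x_1,x_2)=\max\{x_2-\eta/2,\ \max_t\tilde h(x_1-x_1^{(t)},x_2)\}$, to the strict inequalities already proved in \Cref{lem:strictIneq} matched against the explicit local formula for $F$ given by \Cref{lem:Flocal}.

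For the Lipschitz bound I would proceed bottom-up. The two inner minima in $\tilde h$ are minima of affine functions whose gradients lie in $\{(1,0),(0,2),(0,\tfrac12)\}$ and $\{(-1,0),(0,0)\}$ respectively, hence they are $2$-Lipschitz and $1$-Lipschitz, which makes $\tilde h$ $3$-Lipschitz. Pointwise maxima and the translations $x_1\mapsto x_1-x_1^{(t)}$ preserve this constant, so $h$, $\widetilde H$, and finally $H=\max\{-5,\widetilde H\}$ are all $3$-Lipschitz. For the sublevel gap, $H\geq -5$ is immediate, while the piecewise formula \eqref{eq:pieceH} gives $\tilde h(x,0)\leq \eta/2$ for every $x\in\mathbb R$ (the second inner minimum is at most $-\eta/2$ and the first at most $\eta$). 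Hence $\widetilde H(0,0)\leq \eta/2\leq 1$, so $H(\bm 0)-\inf H\leq 1+5=6$.

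For the local matching, fix $t\in[T']$ and take $\nu\leq\min\{\eta/8,\sigma/8\}$. For any $\bm y\in\mathbb B_\nu^2((x_1^{(t)},0))\otimes\mathbb R^{d-2}$, \Cref{lem:strictIneq} applied at $(y_1,y_2)$ gives $\tilde h(y_1-x_1^{(t)},y_2)=y_1-x_1^{(t)}$ and shows this value strictly dominates both $y_2-\eta/2$ and every $\tilde h(y_1-x_1^{(t')},y_2)$ with $t'\neq t$; hence $\widetilde H(y_1,y_2)=y_1-x_1^{(t)}$. Since $|y_1-x_1^{(t)}|\leq \nu\ll 5$, the outer clamp against $-5$ is inactive, so $H(\bm y)=y_1-x_1^{(t)}$, which coincides with $F(\bm y)$ by \Cref{lem:Flocal}. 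The main (mild) obstacle is the careful bookkeeping in \eqref{eq:pieceH} when verifying the upper bound on $\tilde h(\cdot,0)$; everything else flows directly from the lemmas already established, since $H$ is by construction independent of $x_3,\dots,x_d$, so the two-dimensional analysis extends trivially to the full product neighborhood.
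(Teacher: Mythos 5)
Your proof is correct and follows essentially the same route as the paper: the Lipschitz constant and sublevel gap are read off from the layered $\max/\min/{+}$ structure of $h$, and the local identification of $F$ with $H$ is exactly the combination of \Cref{lem:strictIneq} (to pin $\widetilde H$ down to $y_1-x_1^{(t)}$ near $(x_1^{(t)},0)$ and to see that the clamp at $-5$ is inactive) with \Cref{lem:Flocal} (to match it to $F$), using $\nu=\eta/8<\sigma/8$.
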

\begin{proof} From the piecewise linear expression in \eqref{eq:pieceH}, it is easy to see that $H$ is $3$-Lipschitz.
Note that $h(x,0) \leq \max\{-\frac{\eta}{2},\eta - \frac{\eta}{2}\} = \frac{\eta}{2} \leq 1, \forall x \in \mathbb{R}$. Then, it follows that $H(\bm{0}) - \inf_{\bm{x}} H(\bm{x})  = \max\{-5, \widetilde{H}(0,0)\} + 5 \leq 1 + 5 = 6$.
Let $\nu = \frac{\eta}{8}$.
	\Cref{lem:strictIneq} new yields that $\widetilde{H}(\bm{y}) > y_2 - \frac{\eta}{2} \geq -\frac{5\eta}{8} > -5$. Thus, with \Cref{lem:Flocal} and $\nu < \frac{\sigma}{8}$, for all such $\bm{y}$ we have $F(\bm{y}) = \widetilde{H}(\bm{y}) = H(\bm{y})$.
\end{proof}

\paragraph{Resolution of Approximate Stationarity.}

In this part, we will prove that there is no GAS point below certain precision near the  ``ambiguity region''.
\begin{lemma}\label{lem:HisHtilt}
	If $H(\bm{x}) \geq -1$, then for any $\bm{y}\in\mathbb{B}^d_{2\delta}(\bm{x})$, we have $H(\bm{y}) = \widetilde{H}(y_1, y_2)$.
\end{lemma}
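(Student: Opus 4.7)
The plan is to translate the pointwise hypothesis $H(\bm{x}) \geq -1$ into a lower bound on $\widetilde{H}$ throughout the ball $\mathbb{B}^d_{2\delta}(\bm{x})$. Since $H = \max\{-5,\widetilde{H}\}$, the hypothesis immediately forces $\widetilde{H}(x_1,x_2) \geq -1$, and the conclusion $H(\bm{y}) = \widetilde{H}(y_1,y_2)$ is equivalent to $\widetilde{H}(y_1,y_2) \geq -5$. So the whole lemma reduces to estimating how far $\widetilde{H}$ can drop over a radius-$2\delta$ ball around $(x_1,x_2)$.

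The key step is a global Lipschitz bound for $\widetilde{H}$, independent of $\eta$ and of the queries $\{x_1^{(t)}\}_{t \in [T']}$. Since $\widetilde{H}$ is a pointwise maximum of $x_2 - \eta/2$ together with the translated copies $\tilde{h}(x_1 - x_1^{(t)}, x_2)$, it suffices to bound the Lipschitz constants of those building blocks. The piece $x_2 - \eta/2$ is trivially $1$-Lipschitz. For $\tilde{h}(a,b) = A(a,b) + B(a)$ with $A = \min\{a+\eta/2,\, 2b+\eta,\, b/2+\eta\}$ and $B = \min\{-a+5\eta/2,\, -\eta/2\}$, I would enumerate: each affine piece of $A$ has gradient in $\{(1,0),(0,2),(0,1/2)\}$ and each of $B$ has gradient in $\{(-1,0),(0,0)\}$, so every linearization of $\tilde{h}$ has gradient whose first coordinate lies in $[-1,1]$ and second coordinate in $[0,2]$. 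This gives a Lipschitz constant at most $\sqrt{5}$ for $\tilde{h}$, and the pointwise max inherits the same bound, so $\widetilde{H}$ is $\sqrt{5}$-Lipschitz in its two arguments.

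With this in hand, for any $\bm{y} \in \mathbb{B}^d_{2\delta}(\bm{x})$ I have $\widetilde{H}(y_1,y_2) \geq \widetilde{H}(x_1,x_2) - \sqrt{5}\cdot\|(y_1,y_2)-(x_1,x_2)\| \geq -1 - 2\sqrt{5}\,\delta$. Using the standing assumption $\delta < 1/\sqrt{17}$, one gets $2\sqrt{5}\,\delta < 2\sqrt{5/17} < 2$, whence $\widetilde{H}(y_1,y_2) > -3 > -5$. This deactivates the $-5$ clamp in the definition of $H$, yielding $H(\bm{y}) = \widetilde{H}(y_1,y_2)$ as required.

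The only non-routine step is the Lipschitz estimate for $\tilde{h}$, which requires a small case-check over its piecewise-affine pieces but uses no structural information about the queries $\{x_1^{(t)}\}$. In fact, the numerical threshold $1/\sqrt{17}$ on $\delta$ in the main theorems is precisely what is needed to make $2\sqrt{5}\,\delta$ strictly smaller than the slack $-1 - (-5) = 4$ between the hypothesis level and the flooring level baked into the definition of $H$, so the overall constants line up tightly.
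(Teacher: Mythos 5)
Your proof is correct and takes essentially the same approach as the paper's: the paper invokes the $3$-Lipschitzness of $H$ established in \Cref{lem:HisF} together with the standing $\delta<\tfrac{1}{2}$ to get $H(\bm{y})\geq H(\bm{x})-6\delta\geq -1-6\delta>-5$, whereas you re-derive a (slightly sharper) $\sqrt{5}$-Lipschitz bound for $\widetilde{H}$ directly from the gradients of its affine pieces and conclude the same way; both arguments leave the $\max$ with $-5$ inactive on the $2\delta$-ball. One small inaccuracy in your closing remark: this lemma only needs $2\sqrt5\,\delta<4$, i.e.\ $\delta<2/\sqrt5$, so $1/\sqrt{17}$ is far from tight here --- that threshold is dictated by the stationarity-resolution estimate in \Cref{lem:noGAS-4}, not by this step.
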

\begin{proof} By \Cref{lem:HisF} and $0 \leq \delta < \frac{1}{2}$, we know that for any $\bm{y}\in\mathbb{B}^d_{2\delta}(\bm{x})$,
	\[
	H(\bm{y}) \geq H(\bm{x}) - \left| H(\bm{y}) - H(\bm{x}) \right|
	\geq -1 - 6\delta \geq -4 > -5,
	\]
	as required.
\end{proof}

The main lemma in this part is as follows:
\begin{lemma}\label{lem:noGAS-4}
	If $H(\bm{x}) \geq -1$, then $\dist\Big( 0, \partial_\delta H(\bm{x}) \Big)\geq \frac{1}{\sqrt{17}}$.
\end{lemma}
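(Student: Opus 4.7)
My plan is a three-step reduction. First, Lemma~\ref{lem:HisHtilt} localizes the computation: since $H(\bm{x}) \geq -1$, for every $\bm{y} \in \mathbb{B}^d_\delta(\bm{x})$ the functions $H$ and $\widetilde{H}(y_1, y_2)$ agree on a neighborhood of $\bm{y}$ (because $\mathbb{B}_\delta^d(\bm{y}) \subseteq \mathbb{B}^d_{2\delta}(\bm{x})$), so $\partial H(\bm{y}) = \partial \widetilde{H}(\bm{y})$, and hence $\partial_\delta H(\bm{x}) = \partial_\delta \widetilde{H}(\bm{x})$. Writing $\widetilde{H}(y_1, y_2) = \max_{t \in [T']} h(y_1 - x_1^{(t)}, y_2)$ (the summand $y_2 - \eta/2$ is absorbed into the max since $h(x,y) \geq y - \eta/2$ by construction), the Clarke max-rule yields that every element of $\partial \widetilde{H}(\bm{y})$ is a convex combination of subgradients $\partial h(y_1 - x_1^{(t)}, y_2)$ over the active indices $t$, with zero components in coordinates $3, \dots, d$.

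Next, I would exhibit an explicit convex set containing $\partial_\delta H(\bm{x})$. From the piecewise expression~\eqref{eq:pieceH}, $h$ has constant gradient $(1, 0)$ on $S_3$, and by Lemma~\ref{lem:wedge1dprop}, $\partial h \subseteq [-1, 0] \times [1/2, 2]$ on the complement $S_3^c$. Since the Clarke subdifferential is a convex hull of limits of nearby gradients, $\partial h(a, b) \subseteq \conv\bigl(\{(1, 0)\} \cup [-1, 0] \times [1/2, 2]\bigr)$ for every $(a, b) \in \mathbb{R}^2$. Combining this with the max-rule and zero-padding from the previous step, every $\bm{g} \in \partial_\delta H(\bm{x})$ admits a representation
\[
\bm{g} = \lambda\,(1, 0, 0, \dots, 0)^\top + (1 - \lambda)\,(u_1, u_2, 0, \dots, 0)^\top
\]
for some $\lambda \in [0, 1]$ and $(u_1, u_2) \in [-1, 0] \times [1/2, 2]$.

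The final step is a short case analysis on $\lambda$ to minimize $\|\bm{g}\|^2 = (\lambda + (1-\lambda)u_1)^2 + ((1-\lambda)u_2)^2$. If $\lambda \leq 1/2$, one can zero $g_1$ by choosing $u_1 = -\lambda/(1-\lambda) \in [-1, 0]$, but then $g_2 = (1-\lambda)u_2 \geq 1/4$, yielding $\|\bm{g}\| \geq 1/4 > 1/\sqrt{17}$. If $\lambda > 1/2$, the norm-minimizing choices are $u_1 = -1$ and $u_2 = 1/2$, which reduce the problem to the one-variable quadratic $(2\lambda - 1)^2 + (1 - \lambda)^2 / 4$; its minimum over $\lambda$ equals $1/17$, attained at $\lambda = 9/17$. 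In either case $\|\bm{g}\| \geq 1/\sqrt{17}$, as desired.

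The delicate step is the polygon containment in the second paragraph: one must carefully compose the max-rule over the $T'$ pieces of $\widetilde{H}$ with the pointwise inclusion for $\partial h$ and the outer Goldstein convex hull over $\mathbb{B}_\delta^d(\bm{x})$, to conclude the relatively tight inclusion displayed above. A crude bound (projecting onto $g_1$ or $g_2$ alone) would miss the sharp constant; the stated $1/\sqrt{17}$ is precisely the distance from the origin to the edge of the enclosing polygon joining $(1, 0)$ and $(-1, 1/2)$, and capturing it requires the two-dimensional optimization above.
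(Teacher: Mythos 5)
Your proposal is correct and reaches the same key object as the paper -- the containment $\partial_\delta H(\bm{x}) \subseteq \conv\bigl(\bm{e}_1,\,[-1,0]\times[\tfrac12,2]\bigr)\otimes\{0\}^{\otimes d-2}$ followed by the distance computation to the origin -- but you get there by a genuinely cleaner route. The paper takes the minimal-norm element $\bm{g}$ of $\partial_\delta\widetilde{H}(x_1,x_2)$, invokes Carath\'eodory to express it as a convex combination of three subgradients $\bm{g}^i\in\partial\widetilde H(\bm y^i)$, partitions the three representatives by whether $\bm y^i$ lies in the interior, complement, or boundary of a translate of $S_3$, and shows each within-group average lands in the polygon. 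You instead establish the uniform pointwise inclusion $\partial h(a,b)\subseteq\conv\bigl(\{(1,0)\}\cup[-1,0]\times[\tfrac12,2]\bigr)$ for all $(a,b)\in\mathbb{R}^2$ directly from the limit-of-gradients characterization of the Clarke subdifferential (the gradient of $h$ is $(1,0)$ on $\intt S_3$ and otherwise lies in $[-1,0]\times[\tfrac12,2]$ by \Cref{lem:wedge1dprop}), then push this convex set through the max-rule for $\widetilde H$ and the outer Goldstein hull. That buys you a Carath\'eodory-free argument with no case split on where the representatives lie. The final numerical step is also reorganized -- a two-case split on $\lambda$ (lower-bounding $g_2$ when $\lambda\le\tfrac12$, solving a one-variable quadratic when $\lambda>\tfrac12$) in place of the paper's eliminate-$v_2$-first calculation in \Cref{lem:numineq1} -- but both correctly produce the bound $1/\sqrt{17}$.

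One nit: you write $\partial H(\bm y)=\partial\widetilde H(\bm y)$ in the first paragraph, which is an abuse of notation since the left side lives in $\mathbb R^d$ and the right in $\mathbb R^2$; you do flag the zero-padding in coordinates $3,\dots,d$, but this should appear explicitly as $\partial H(\bm y)=\partial\widetilde H(y_1,y_2)\otimes\{0\}^{\otimes(d-2)}$ to match the paper's careful handling of the dimension reduction.
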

\begin{proof}
We begin by computing
\begin{align*}	
\partial_\delta H(\bm{x}) &= \conv\left({\textstyle \bigcup_{\bm{y} \in \mathbb{B}^d_\delta (\bm{x})} \partial H (\bm{y})} \right) \tag{\Cref{def:subg}} \\
&= \conv\left({\textstyle \bigcup_{(y_1,y_2) \in \mathbb{B}^2_\delta \big((x_1,x_2)\big)} \partial \widetilde{H} (y_1,y_2)\otimes \{0\}^{\otimes d-2}} \right) \tag{\Cref{lem:HisHtilt}} \\
&= \conv\left(\left({\textstyle \bigcup_{(y_1,y_2) \in \mathbb{B}^2_\delta \big((x_1,x_2)\big)}} \partial \widetilde{H}(y_1,y_2)\right)\otimes \{0\}^{\otimes d-2} \right) \tag{\cite[\S 3, Exercise 3(4)]{blyth1975set}} \\
&= 
\conv\left({\textstyle \bigcup_{(y_1,y_2) \in \mathbb{B}^2_\delta \big((x_1,x_2)\big)}} \partial \widetilde{H} (y_1,y_2)\right)\otimes \{0\}^{\otimes d-2} \\
& = \partial_\delta \widetilde{H}(x_1,x_2) \otimes \{0\}^{\otimes d-2}.
\end{align*}
Therefore, it suffices to show that $\dist\Big(0, \partial_\delta \widetilde{H}(x_1,x_2)\Big) \geq \frac{1}{\sqrt{17}}$. Let $\bm{g} \coloneqq \arg\min_{\bm{z}\in \partial_\delta \widetilde{H}(x_1,x_2)} \|\bm{z}\|$. By Carathéodory’s theorem \cite[Theorem 2.29]{rockafellar2009variational}, we can write $\bm{g}$ with a finite convex combination 
\[
\bm{g} = \sum_{i=1}^{3} \lambda_i \bm{g}^i, \qquad \textnormal{where } \  \bm{g}^i \in \partial \widetilde{H}(\bm{y}^i), \  \bm{y}^i \in \mathbb{B}^2_\delta (x_1,x_2),\  \sum_{i=1}^{3}\lambda_i = 1,\  \forall i \in [3]: \lambda_i \geq 0.
\]
Consider partition of $[3]=P_1 \sqcup P_2 \sqcup P_3$ with
\[
\begin{aligned}
P_1&\coloneqq\left\{i\in[3]: \exists t \in [T'], \bm{y}^i \in \intt\big(S_3-(x_1^{(t)},0)\big)\right\}, \\
P_2&\coloneqq\left\{i\in[3]: \forall t \in [T'], \bm{y}^i \in \big(S_3-(x_1^{(t)},0)\big)^c\right\}, \\
P_3&\coloneqq\left\{i\in[3]: \exists t \in [T'], \bm{y}^i \in \bd\big(S_3-(x_1^{(t)},0)\big)\right\}.
\end{aligned}
\]
Then, we can rewrite $\bm{g}$ by averaging within $P_i, \forall i \in [3]$:
\[
\bm{g}  =  \sum_{i \in P_1} \lambda_i \bm{g}^i + \sum_{j \in P_2} \lambda_j \bm{g}^j + \sum_{k \in P_3} \lambda_k \bm{g}^k = \sum_{i=1}^{3} \theta_i \bm{g}^{P_i},
\]
with $\theta_i \coloneqq \sum_{j \in P_i} \lambda_j$ and $\bm{g}^{P_i} \coloneqq \sum_{j \in P_i} \frac{ \lambda_j }{\theta_i} \bm{g}^j$ for any $i \in [3]$.
Thus, it suffices to consider taking the convex hull within every $P_i, \forall i \in [3]$ according the following:

\begin{itemize}
	\item Averaging within $P_1$: $\bm{g}^{P_1}=\bm{e}_1$ by the piecewise characterization in \eqref{eq:pieceH}.
	\item Averaging within $P_2$: $\bm{g}^{P_2} \subseteq \left[ \begin{array}{c}
    {[-1, 0]} \\ {[\frac{1}{2}, 2]}  \end{array} \right]$ by \Cref{lem:wedge1dprop}.
    \item Averaging within $P_3$: $\bm{g}^{P_3}
\subseteq 
\conv \left( \bm{e}_1, \left[ \begin{array}{c}
    {[-1, 0]} \\ {[\frac{1}{2}, 2]}  \end{array} \right] \right)$ by \citep[Theorem 9.61]{rockafellar2009variational}.
\end{itemize}
Taking a convex combination, we can assert that
\[
\bm{g}=\sum_{i=1}^3 \theta_i \bm{g}^{P_i} \subseteq \conv \left( \bm{e}_1, \left[ \begin{array}{c}
    {[-1, 0]} \\ {[\frac{1}{2}, 2]}  \end{array} \right] \right).
\]
What is left is to show the following numerical estimation:
\begin{lemma}\label{lem:numineq1} It holds that
\[
	\frac{1}{17}= \min_{t,v_1,v_2}   (t+(1-t)v_1)^2 + (1-t)^2v_2^2\quad \textnormal{ s.t.} \quad t\in[0,1],v_1\in[-1,0],v_2 \in [1/2,2].
\]
\end{lemma}
\begin{proof}
	It is easy to see that $v_2^* = \frac{1}{2}$. Let the objective function be $Q$.	
	Note that
	\[
	Q(t,v_1,v_2^*) = 
	\left(\frac{1}{4} + (v_1-1)^2\right)\cdot t^2 - \frac{1}{4}\left(  v_1 - \frac{1}{2}\right)^2\cdot t + v_1^2 + \frac{1}{4}.
	\]
	By first-order optimality condition and $v_1 \in [-1,0]$, we have
	\[
	0<t^* = \frac{(2v_1-1)^2}{1+4(v_1 -1)^2} = 1-\frac{4(1-v_1)}{1+4(1-v_1)^2}<1.
	\]
	This implies that
	\[
	Q(t^*,v_1,v_2^*)=\frac{1}{1+4(v_1 -1)^2} \geq \frac{1}{17},
	\]
	as required.
\end{proof}

We continue with
\begin{align*}
	\|\bm{g}\| &\geq \dist\left(0, \conv \left( \bm{e}_1, \left[ \begin{array}{c}
    {[-1, 0]} \\ {[\frac{1}{2}, 2]}  \end{array} \right] \right) \right) \\
	&= \min_{t \in [0,1]} \left\| \left[ \begin{array}{c}
    {t + (1-t)\cdot [-1, 0]} \\ {(1-t)\cdot [\frac{1}{2}, 2]}  \end{array} \right] \right\|   \\
	&=\min_{\substack{t:0 \leq t \leq 1,\\v_1:-1 \leq v_1\leq 0,\\v_2:\frac{1}{2} \leq v_2 \leq 2}} \sqrt{ (t+(1-t)v_1)^2 + (1-t)^2v_2^2} \\
	&= \frac{1}{\sqrt{17}}. \tag{by \Cref{lem:numineq1}}
\end{align*}
Thus, we have proved that $\dist\big(0, \partial_\delta H(\bm{x})\big) = \dist\left(0, \partial_\delta \widetilde H(x_1,x_2)\right) = \|\bm{g}\| \geq \frac{1}{\sqrt{17}}$.
\end{proof}

\subsubsection{Hardness Results}\label{sec:zrDetReduction}

In this subsection, we put everything together. We will first prove \Cref{thm:findzr} as its proof is conceptually easier and can be reused in that of \Cref{thm:find}.
\paragraph{Deterministic General Zero-Respecting Algorithms.}
\begin{proof}[Proof of \Cref{thm:findzr}]
	Fix any $A\in\mathcal{A}_{\textnormal{det-gzr}}, d \geq 2$, and a finite iteration number $T < +\infty$. 
	Apply $A$ to the single coordinate resisting construction in \Cref{sec:construction}. We get a resisting $F:\mathbb{R}^d\rightarrow \mathbb{R}$ and $\left\{\bm{x}^{A[F],(t)}\right\}_{t=1}^T$. Recall that $\bm{x}^{A[F],(1)} = \bm{0}$. By \Cref{lem:Flocal} and the definition of $\mathcal{A}_{\textnormal{det-gzr}}$, with a simple induction on $t$, we have $2 \notin \supp\left(\bm{x}^{A[F],(t)} \right), \forall t \in [T]$. With the ``wedge'' construction $H:\mathbb{R}^d \rightarrow \mathbb{R}$ according to $F$
 and \Cref{lem:HisF}, we know that $H$ and $F$ are indistinguishable to $A$ by querying the local oracle on $\left\{\bm{x}^{A[F],(t)}\right\}_{t=1}^T$. Formally, there exists a $\nu > 0$ such that $\mathcal{O}_F\left( \bm{x}^{A[F],(t)} \right)(\bm{y}) = \mathcal{O}_H\left( \bm{x}^{A[F],(t)} \right)(\bm{y})$ for all $\bm{y} \in \mathbb{B}^d_\nu\left( \bm{x}^{A[F],(t)} \right),t\in[T]$.  That is to say,  $\bm{x}^{A[F],(t)} = \bm{x}^{A[H],(t)}, \forall t \in [T]$ as $A$ is deterministic.
	Thus, for any $t\in[T]$, it holds that $H\left(\bm{x}^{A[H],(t)}\right) = F\left(\bm{x}^{A[F],(t)}\right) = 0 > -1$. However, by \Cref{lem:noGAS-4}, 
	\[
	\min_{t \in [T]} \ \dist\Big(0, \partial_\delta H\left(\bm{x}^{{A[H]},(t)} \right)\Big) \geq  \frac{1}{\sqrt{17}} > \epsilon,
	\]
	which completes the proof by noting that $H \in \mathcal{F}_{C,d}^{\textnormal{Lip}}$ from \Cref{lem:HisF}.
\end{proof}

\paragraph{Deterministic Algorithms.}

For the general deterministic case, we use the classic adversarial rotation argument \cite{nemirovskij1983problem,carmon2019lower,woodworth2016tight} to reduce it to the $\mathcal{A}_{\textnormal{det-gzr}}$ case. 
\begin{proof}[Proof of \Cref{thm:find}]

Fix any $A\in\mathcal{A}_{\textnormal{det}}$, a finite $T < +\infty$, and $d \geq T+1$. Apply $A$ to the single coordinate resisting construction in \Cref{sec:construction}. We get a resisting $F:\mathbb{R}^d\rightarrow \mathbb{R}$ and $\left\{\bm{x}^{A[F],(t)}\right\}_{t=1}^T$. Recall that $\bm{x}^{A[F],(1)} = \bm{0}$. Let 
$
\bm{V}\coloneqq  \left[ \begin{array}{cccc}
     \bm{e}_1 & \bm{x}^{A[F],(2)} &  \cdots &	 \bm{x}^{A[F],(T)} \end{array} \right]
      \in \mathbb{R}^{d\times T}.
$
Let $\bm{u}_2 \in \k(\bm{V}^\top)$ and $\|\bm{u}_2\| = 1$, which is possible due to $d > T$. 
By choosing $\widetilde{\bm{U}}\in\mathbb{R}^{d\times (d-2)}$ as an orthonormal basis for the orthogonal complement of $\spn\{ \bm{e}_1, \bm{u}_2 \}$, we define an orthonormal
$
\bm{U}\coloneqq  \left[ \begin{array}{ccc}
     \bm{e}_1 & \bm{u}_2 & \widetilde{\bm{U}}  \end{array} \right]
      \in \mathbb{R}^{d\times d}.
$
Now, let $H$ be the ``wedge'' construction according to $F$ and $G(\bm{x})\coloneqq H(\bm{U}^\top \bm{x})$. We aim to show that there exists a $\nu > 0$ such that $\mathcal{O}_F\left( \bm{x}^{A[F],(t)} \right)(\bm{y}) = \mathcal{O}_G\left( \bm{x}^{A[F],(t)} \right)(\bm{y})$ for all $\bm{y} \in \mathbb{B}^d_\nu\left( \bm{x}^{A[F],(t)} \right),t\in[T]$.
To this end, fix $t \in [T]$ and $\bm{y} \in \mathbb{B}^d_\nu\left( \bm{x}^{A[F],(t)} \right)$. By \Cref{lem:Flocal} with $0<\nu < \frac{\sigma}{8}$, we know that $F(\bm{y}) = y_1 - x^{A[F],(t)}_1$. Observe that $\bm{U}^\top\bm{y} \in \mathbb{B}_\nu^d\left(\bm{U}^\top\bm{x}^{A[F],(t)} \right)$ as $\left\| \bm{U}^\top\left(\bm{y} -\bm{x}^{A[F],(t)}\right) \right\| \leq \nu$. Recall that $2 \notin \supp\left(\bm{U}^\top\bm{x}^{A[F],(t)} \right),\forall t \in [T]$ by construction of $\bm{U}$. It follows from \Cref{lem:HisF} that $H\left(\bm{U}^\top\bm{y}\right) = F\left(\bm{U}^\top\bm{y}\right)$. Note that $\bm{U}^\top\bm{y} \in \mathbb{B}_\nu^1\left(x^{A[F],(t)}_1\right)\otimes \mathbb{R}^{d-1}$. Using \Cref{lem:Flocal} again yields
\[
G(\bm{y}) = H(\bm{U}^\top \bm{y}) = F(\bm{U}^\top \bm{y}) = \bm{e}_1^\top \bm{y} - \bm{e}_1^\top \bm{x}^{A[F],(t)} =  y_1 - x^{A[F],(t)}_1 = F(\bm{y}).
\]
Thus, by shrinking $\nu$ if necessary, we can see that $G$ and $F$ are indistinguishable to $A$ by querying the local oracle at $\left\{\bm{x}^{A[F],(t)}\right\}_{t=1}^T$. It holds that $\bm{x}^{A[F],(t)} = \bm{x}^{A[G],(t)}, \forall t \in [T]$ as $A$ is deterministic. 
 Besides, we observe that
\begin{align*}
\partial_\delta G\left(\bm{x} \right) 
&=\conv\Big({\textstyle \bigcup_{\bm{y} \in \mathbb{B}_\delta^d(\bm{x})}} \bm{U}^\top\partial H\left(\bm{U}^\top\bm{y}\right)\Big)  \\
&=\bm{U}^\top\conv\Big({\textstyle \bigcup_{\bm{y} \in \mathbb{B}_\delta^d(\bm{x})}} \partial H\left(\bm{U}^\top\bm{y}\right)\Big)  \\
&=\bm{U}^\top\conv\Big({\textstyle \bigcup_{\bm{z} \in \mathbb{B}_\delta^d(\bm{U}^\top\bm{x})} }\partial H\left(\bm{z}\right)\Big) 
=\bm{U}^\top \partial_\delta H\left(\bm{U}^\top \bm{x} \right),
\end{align*}
where the first equality is by \citep[Theorem 8.49, Exercise 10.7]{rockafellar2009variational} (see also \citep[Theorem 2.3.10]{clarke1990optimization}), the second one can be deduced from the bijectivity of $\bm{U}$, \citep[Ch.\ 1, \S2, Exercise 2(b)]{munkres2000topology} and \citep[\S A, Proposition 1.3.4]{hiriart2004fundamentals}, and the third equality is due to $\left\{\bm{U}^\top\bm{y}:\bm{y}\in\mathbb{B}_\nu^d(\bm{x})\right\} = \mathbb{B}_\nu^d\left(\bm{U}^\top\bm{x}\right)$.
Therefore, for any $t\in [T]$, $H\left(\bm{U}^\top\bm{x}^{A[G],(t)}\right)= F\left(\bm{x}^{A[F],(t)}\right) = 0 > -4$, we conclude by \Cref{lem:noGAS-4} that,
\[
\min_{t \in [T]} \ \dist\Big(0, \partial_\delta G\left(\bm{x}^{{A[G]},(t)} \right)\Big) = \min_{t \in [T]} \ \dist\Big(0, \bm{U}^\top\partial_\delta H\left(\bm{U}^\top\bm{x}^{{A[G]},(t)} \right)\Big) \geq   \frac{1}{\sqrt{17}} > \epsilon,
\]
which completes the proof by noting that $G \in \mathcal{F}_{C,d}^{\textnormal{Lip}}$ from a simple corollary of \Cref{lem:HisF}.
\end{proof}

\section{Concluding Remarks}\label{sec:concl}
Recently, \citet{zhang2020complexity} introduced a randomized algorithm that computes Goldstein's approximate stationarity \citep{goldstein1977optimization} to arbitrary precision with a dimension-free polynomial oracle complexity. 
	In this paper, we show that no deterministic algorithm can do the same.
	Even without the dimension-free requirement, we show that any finite time guaranteed deterministic method cannot be general zero-respecting, which rules out most of the oracle-based methods in smooth optimization and any trivial derandomization of \citet{zhang2020complexity}. 
	It also implies that any finite-time deterministic method for GAS must be significantly different from most of the commonly used algorithmic scheme in smooth optimization. Thus, new algorithmic ideas are necessary for computing GAS in finite time.
	Our results shed light on a fundamental hurdle of nonconvex nonsmooth problems in the modern large-scale setting and their infinite-dimensional extension.

\bibliography{gas-hard.bib}
\bibliographystyle{plainnat}

\end{document}